\pgfplotsset{compat=1.16}
\newtheorem{Theorem}{Theorem}[section]
\newtheorem{Definitio}[Theorem]{Definition}
\newenvironment{Definition}{\begin{Definitio} \rm }{\end{Definitio}}
\newtheorem{Lemma}[Theorem]{Lemma}
\newtheorem{Proposition}[Theorem]{Proposition}
\newtheorem{Remar}[Theorem]{Remark}
\newenvironment{Remark}{\begin{Remar} \rm }{\end{Remar}}
\newtheorem{Exampl}[Theorem]{Example}
\newtheorem{Corollar}[Theorem]{Corollary}
\newenvironment{Corollary}{\begin{Corollar} \rm }{\end{Corollar}}
\author{Rudy Dissler}
\title{Multisections of $(m+3)$--dimensional $m$--spun $3$--manifolds}
\begin{document}
\maketitle
\begin{abstract}
A multisection, or $n$--section, of an $(n+1)$--dimensional manifold is a decomposition of this manifold into $n$ $1$--handlebodies of dimension $n+1$, such that all these handlebodies intersect along a closed surface, and every subcollection of $k$ handlebodies intersects along an $(n-k+2)$--dimensional $1$--handlebody. This concept, due to Ben Aribi, Courte, Golla and Moussard \cite{aribi2023multisections}, generalizes to any dimension Heegaard splittings and Gay--Kirby trisections. Any $(n+1)$--manifold admits a multisection if $n \leq 4$ (see \cite{gay2016trisecting} and \cite{aribi2023multisections}). But there are yet no general existence results for $n \geq 5$.  In this article, we provide a class of examples of multisected manifolds in all dimensions. We extend the concept of $4$--dimensional spun manifolds to any dimension, and construct multisections and their associated multisection diagrams for the class of $m$--spun $3$--manifolds, of dimension $m+3$, for any $m$. This allows us to give infinitely many examples of non-diffeomorphic multisected manifolds, in all dimensions.
\end{abstract}
\section{Introduction}
Throughout this article, we will refer to $1$--handlebodies (i.e. connected manifolds consisting in only $0$-- and $1$--handles) as handlebodies, and only consider closed, orientable manifolds. Decompositions of $3$--dimensional manifolds into two handlebodies intersecting along their common \mbox{boundary}, known as Heegaard splittings, have been intensively studied. This concept has been recently adapted to dimension $4$ by Gay and Kirby \cite{gay2016trisecting}, giving birth to the theory of trisected smooth $4$--manifolds. Then Rubinstein and Tillmann introduced in \cite{rubinstein2018generalized} a notion of multisection for PL--manifolds of any dimensions, which coincides with Heegaard splittings in dimension $3$ and trisections in dimension $4$. But their definition does not always allow a diagrammatic representation of a multisected manifold, as it is the case in dimension $3$ and $4$ (in particular because the intersection of all the handlebodies is not a surface if the dimension of the manifold is greater than~$5$).\\
We will consider in this article an alternative notion of multisection, from Ben Aribi, Courte, Golla and Moussard \cite{aribi2023multisections}, which also generalizes to any dimension the concept of Heegaard splitting and trisection and allows a diagrammatic approach. It is a well-known fact that any $3$--manifold admits a Heegaard splitting; the analogous statement is proven in dimension $4$ in \cite{gay2016trisecting} and in dimension $5$ in \cite{aribi2023multisections}. But it is yet not known if every smooth or PL manifold of dimension greater than or equal to six admits a multisection. This motivates the construction of examples of multisected manifolds in all dimensions (see \cite{aribi2023multisections} for basic examples and \cite{moussard2023multisections} for multisections of surface bundles and bundles over the circle). In this article, we will build multisections (or $n$--sections, $n$ referring to the number of pieces used in the decomposition, when the manifold is $(n+1)$--dimensional) and their associated multisection diagrams for a class of manifolds: what we call $m$--spun $3$--manifolds.\\
In the literature, spun manifolds are $4$--dimensional manifolds defined as follows. Consider a smooth $3$--manifold $M$, and denote by $M^o$ the compact $3$--manifold $M \setminus Int(B)$, where $B$ is a $3$--ball embedded in $M$. The spun of $M$ is the smooth, closed $4$--manifold $\partial\big(M^o \times B^{2} \big) = (M^{o} \times S^{1}) \cup_{S^{2} \times S^{1}} (S^{2} \times B^{2})$. We extend this definition to any dimension by defining the \mbox{$m$--spun} of a closed $n$--manifold $X$ as the $(n+m)$--dimensional manifold $\mathcal{S}_{m}(X) =  \partial (X^o \times B^{m+1}) = (X^{o} \times S^{m}) \cup_{S^{n-1} \times S^{m}} (S^{n-1} \times B^{m+1})$, where $X^o$ stands for $X$ minus the interior of an embedded $n$--ball. With these notations, the spun of a $3$--manifold $M$ as usually defined in dimension $4$ would be $\mathcal{S}_{1}(M)$, i.e. the $1$--spun of $M$. Notice that the definition of a spun manifold adapts naturally to the PL category.\\
In this article, we will consider $m$--spun $3$--manifolds, i.e. of the form $\mathcal{S}_{m}(M)=(M^o \times S^m) \cup_{S^2 \times S^{m}} (S^2 \times B^{m+1})$. Our goal is to provide multisections and their associated multisection diagrams for these spaces. We will generalize to any dimension the construction of Meier in \cite{MR3917737}, where he produces trisections and trisection diagrams for $1$--spun $3$--manifolds. We obtain the follo\-wing result, in the PL as well as in the smooth category, as a consequence of Proposition \ref{prop_multisection} and \mbox{Proposition \ref{prop_diagram}}.
\begin{Theorem}
\label{thm_introspun}
An $m$--spun $3$--manifold $\mathcal{S}_m(M)$ admits an $(m+2)$--section of genus $(m+2)g$, where $g$ is the genus of a Heegaard splitting of $M$. Moreover, an $(m+2)$--section diagram can be explicitly derived from a Heegaard diagram of $M$. 
\end{Theorem}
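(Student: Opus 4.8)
The statement is the conjunction of Proposition \ref{prop_multisection}, which provides the $(m+2)$-section of genus $(m+2)g$, and Proposition \ref{prop_diagram}, which provides the diagram; so the plan is to prove each of these by an explicit construction that generalizes Meier's treatment of the case $m=1$ in \cite{MR3917737}. For the set-up, fix a genus-$g$ Heegaard splitting $M = H_1 \cup_{\Sigma} H_2$ and take the $3$-ball removed to form $M^o$ to be a small ball centred at a point of $\Sigma$, so that it meets $\Sigma$ in a properly embedded disk and each $H_i$ in a half-ball. Then $M^o = H_1^o \cup_{\Sigma^o} H_2^o$ is a Heegaard splitting rel boundary, with $\Sigma^o$ a once-punctured genus-$g$ surface, $H_i^o \cong H_i$ still a genus-$g$ handlebody, and $\partial M^o = S^2 = D_1 \cup_{\partial} D_2$ with $D_i = \overline{\partial H_i^o \setminus \Sigma^o}$ a disk.

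The core of the argument is the construction of the $m+2$ pieces of a multisection of $\mathcal{S}_m(M) = \partial(M^o \times B^{m+1})$. I would subdivide $B^{m+1}$ as the cone, from its centre, over a triangulation of $S^m = \partial B^{m+1}$ as the boundary of the standard $(m+1)$-simplex: this cuts $B^{m+1}$ into $m+2$ balls $Q_1,\dots,Q_{m+2}$, the cones over the facets, whose $k$-fold intersections are $(m-k+2)$-balls and whose full intersection is the centre point. Crossing with $M^o$, passing to the boundary, and combining with the product decomposition $M^o \times B^{m+1} = (H_1^o \times B^{m+1}) \cup_{\Sigma^o \times B^{m+1}} (H_2^o \times B^{m+1})$ together with a compatible subdivision, near $S^m$, of a collar of $\Sigma^o$ in $M^o$, one assembles $m+2$ subsets $Z_1,\dots,Z_{m+2}$ of $\partial(M^o \times B^{m+1})$. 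Each $Z_i$ is glued from blocks of the form $S^2 \times (\text{ball})$ and $H_j^o \times (\text{ball})$; the point making $Z_i$ a $1$-handlebody of dimension $m+3$ is that $H_j^o \times (\text{ball})$ is itself a handlebody, and that each $S^2 \times (\text{ball})$ block is attached to these along its sphere factor so as to cancel its would-be $2$-handle — the toy model being that $(S^2 \times B^{m+1}) \cup (B^3 \times B^m)$, glued along $S^2 \times B^m$, is an $(m+3)$-ball. A block-by-block inspection then shows that every $k$-fold intersection $Z_{i_1} \cap \dots \cap Z_{i_k}$ is an $(m-k+4)$-dimensional $1$-handlebody, so that $Z_1 \cup \dots \cup Z_{m+2}$ is an $(m+2)$-section.

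It remains to compute the genus and to extract the diagram. By construction the central surface $Z_1 \cap \dots \cap Z_{m+2}$ is obtained from $\partial M^o = S^2$ by removing $m+2$ disjoint disks and gluing into each one a parallel copy of $\Sigma^o$, the copies sitting in $M^o \times S^m$ over the $m+2$ vertices of the simplex; the count $\chi = (m+2)\chi(\Sigma^o) + \chi(\Sigma_{0,m+2}) = (m+2)(1-2g) + (2-(m+2)) = 2-2(m+2)g$ shows it is a closed surface of genus $(m+2)g$, establishing Proposition \ref{prop_multisection}. For Proposition \ref{prop_diagram}, one reads the $(m+2)$-section diagram off this surface: on each of the $m+2$ copies of $\Sigma^o$ place, according to which of $H_1^o, H_2^o$ the relevant piece picks up over that vertex, either the $\alpha$-curves or the $\beta$-curves of a fixed Heegaard diagram of $M$, and adjoin a standard system of curves on the $(m+2)$-holed sphere; recording which curves bound disks in which $Z_i$ yields the $m+2$ cut systems.

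I expect the main obstacle to be the bookkeeping along the corner $S^2 \times S^m = (M^o \times S^m) \cap (S^2 \times B^{m+1})$: the subdivisions must be chosen so that block by block everything is the standard product decomposition, so that each $Z_i$ and each multiple intersection is \emph{honestly} a $1$-handlebody, not merely homotopy equivalent to one, and so that the pieces glue without spurious topology — this is what pins down exactly how $H_1^o$ and $H_2^o$ are distributed among the sectors. A secondary difficulty, for the diagram, is verifying the compatibility axioms of an $(m+2)$-section diagram — that the cut systems of the appropriate subfamilies of pieces are Heegaard-type diagrams of the prescribed standard summands — which I would reduce to a sequence of handle slides and stabilizations relating the Heegaard diagram of $M$ to its sectorised copies on the central surface.
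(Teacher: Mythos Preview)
Your plan has a genuine gap at the crucial step. You place the removed ball on the Heegaard surface so that $M^o=H_1^o\cup_{\Sigma^o}H_2^o$ with only \emph{two} fibrewise pieces, and then assert that the $Z_i$ can be assembled from blocks $S^2\times(\text{ball})$ and $H_j^o\times(\text{ball})$; but you never say which blocks go into which $Z_i$, and the natural assignments fail. Over a sector $Q_i$ the block $S^2\times Q_i$ is not a $1$--handlebody: to kill its $S^2$ you must cap the \emph{entire} $S^2\times F_i$ (with $F_i=Q_i\cap S^m$) by some $B^3\times F_i$, as in your own toy model. With your symmetric splitting the only candidates are $H_1^o\times F_i$ and $H_2^o\times F_i$, each of which meets $S^2\times Q_i$ only in a \emph{half} $D_j\times F_i$; so including just one leaves an uncancelled $S^2\times D^2$--type summand in $Z_i$, while including both gives $M^o\times F_i$, which is not a $1$--handlebody unless $M=S^3$. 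Two fibrewise options are simply not enough to produce $m+2$ distinct pieces that are all $1$--handlebodies and that together cover $\mathcal S_m(M)$; your remark about ``a compatible subdivision of a collar of $\Sigma^o$'' does not supply the missing mechanism.

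The paper's construction resolves this by a genuinely different set--up. The ball is removed from the \emph{interior} of one handlebody $H_\delta$, so that $H_\delta^o\cong (S^2\times I)\cup(\text{$g$ one--handles})$, and then one chooses $(m+2)$ pairwise disjoint systems of $g$ tubes $h^1,\dots,h^{m+2}$ in $H_\delta^o$, each system consisting of thin solid cylinders parallel to the cores of the handles and running all the way to the spherical boundary. The piece $Y_i$ is then taken to be, over sector $B_i$, the closure of $(M^o_r\cup S^2\times r)\setminus h^i_r$ (a thickened copy of $H_\epsilon$, hence a genuine $1$--handlebody that \emph{does} cap all of $S^2$ except small disks), together with, over the adjacent sector $B_{i+1}$, the tubes $h^{i+1}_r$. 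It is precisely the availability of $m+2$ \emph{different} tube systems --- one per sector --- that makes every $Y_I$ a handlebody of the correct dimension and genus; this idea is absent from your proposal. Correspondingly, the diagram in Proposition~\ref{prop_diagram} has, for each cut system, $mg$ curves of $\mu$--type, $g$ of $\lambda$--type, and only $g$ coming from the $\epsilon$--curves of the Heegaard diagram, rather than the $(m+2)g$ copies of $\alpha$-- or $\beta$--curves your description would predict.
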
  
By applying Theorem \ref{thm_introspun} to the family of lens spaces, which admit genus--$1$ Heegaard splittings, we obtain the following corollary (notice that $\mathcal{S}_{0}(M)$ is just the connected sum of $M$ with itself).
\begin{Corollary}
For any $n \geq 2$, there exists infinitely many non--diffeomorphic (resp. not PL--homeomorphic) $(n+1)$--dimensional smooth (resp. PL) manifolds admitting $n$--sections of genus~$n$. 
\end{Corollary}
First, we define multisections and their multisection diagrams. Then we construct the multisections for the $m$--spun $3$--manifolds. Finally, we give an algorithm to derive a multisection diagram from a Heegaard diagram of the fiber, and produce some examples: $m$--spun lens spaces, $m$--spun $3$--torus, and $m$--spun Poincaré homology sphere. 
\section{Multisections}
We follow \cite{aribi2023multisections}. We start by defining multisections in the PL category.
\begin{Definition}
\label{pl_multisection}
For $n \geq 1$, let $Y$ be an $(n+1)$--dimensional closed PL manifold. An $n$--section of $Y$ is a decomposition $Y= \bigcup_{i=1}^n Y_i$, where:
\begin{itemize}
\item the intersection $\cap_{i=1}^n Y_i$ of all the $Y_i$'s is a closed connected surface;
\item for any non-empty proper subset $I$ of $\lbrace 1,...,n \rbrace$, the intersection $Y_I = \cap_{i \in I} Y_i$ is PL--homeo\-morphic to an $(n-\lvert I \rvert +2)$--dimensional handlebody. 
\end{itemize} 
\end{Definition}
We will use handlebodies with corners in our definition of smooth multisections. To that end, we briefly define a submanifold with corners of a smooth manifold. 
\begin{Definition}
Let $Y$ be a smooth $n$--dimensional manifold, together with a smooth atlas $\mathcal{U}$. Let $W$ be a topological $m$--dimensional submanifold of $Y$. We say that $W$ is a submanifold with corners of $Y$ if for all $w \in W$, there exists a positive integer $k$ and a chart $(w \in U_w, \phi_w)$ in $\mathcal{U}$ centered at $w$, such that $\phi_w (U_w \cap W) = \mathbb{R}^{m}_k = [0, \infty)^k \times \mathbb{R}^{m-k}$. We say that such a $w$ belongs to the $(m-k)$--stratum of $W$. 
\end{Definition}
\begin{Remark}
Because there cannot exist a diffeomorphism of $\mathbb{R}^m$ sending $\mathbb{R}^{m}_k$ to $\mathbb{R}^{m}_{k'}$ if $k \neq k'$, each $w$ belongs to one and only one stratum of $W$. Therefore we have a partition of $W$ into its strata, which we call a \emph{stratification} of $W$.
\end{Remark}
\begin{Definition}
\label{def_multisection} 
For $n \geq 1$, let $Y$ be an $(n+1)$--dimensional closed smooth manifold. An $n$--section of $Y$ is a decomposition $Y= \bigcup_{i=1}^n Y_i$, such that, denoting by $Y_I$ the intersection $\cap_{i \in I} Y_i$:
\begin{itemize}
\item $Y_{\lbrace 1,...,n \rbrace}$ is a closed, connected smooth surface;
\item for $I \subset \lbrace 1,...,n \rbrace$ with $\lvert I \rvert \leq n-1$, $Y_I$ is a submanifold with corners of $Y$ with interior diffeomorphic to the interior of a smooth $(n - \lvert I \rvert +2)$--dimensional handlebody, and with a stratification given by $\lbrace \mathring{Y}_J, J \supseteq I \rbrace$, where $\mathring{Y}_J$ is in the $(n - \lvert J \rvert +2)$--stratum of $Y_I$ for all $J \supseteq I$. 
\end{itemize}
We call the intersection $Y_{\lbrace 1,...,n \rbrace}$ the \emph{central surface} of the multisection, its genus the \emph{genus of the multisection}, and one $Y_I$ a \emph{piece} of the multisection.
\end{Definition}
\begin{Remark}
\label{rmk_std}
Specifying the stratification of the $Y_I$'s fixes a decomposition of the normal bundle to each stratum. Consider the following decomposition of the $(k-1)$--simplex: $\Delta^{k-1}=\bigcup_{i=1}^{k} \Delta_{i}^{k-1}$, where $\Delta_{i}^{k-1}$ is the convex hull of $(p_0, p_1, ... , \hat{p}_i, ... , p_k)$, with $p_0$ the barycenter of $\Delta^{k-1}$ and $\hat{p}_i$ meaning we omit the term $p_i$. This decomposition naturally induces a decomposition of the euclidean space $\mathbb{R}^{k-1}= \bigcup_{i=1}^k A_i$, that we call $\emph{standard}$. Then, the condition on the $Y_I$'s means that, for $x \in \mathring{Y}_I$, there exists a neighborhood $U_x$ of $x$ in $Y$ and a diffeomorphism $\phi_x$ from $U_x$ to $\mathbb{R}^{n+1}$ that sends $U_x \cap Y_i$ to $\mathbb{R}^{n- \lvert I \rvert +2} \times A_i$ for all $i \in I$. We could imagine more relaxed definitions for the pieces (see Figure \ref{local_models}). In this case, however, we could obtain two multisections very similar, that is to say only differing by the angles between the pieces, but not diffeomorphic, in the sense that no diffeomorphism of the manifold could send one multisection to the other, piece by piece. For further details, see \cite{aribi2023multisections}.
\end{Remark}
\begin{figure}[h!]
\[
\begin{tikzpicture}[scale=0.3]
\node (mypic) at (0,0) {\includegraphics[scale=0.5]{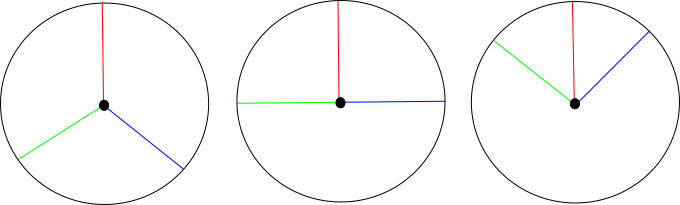}};
\end{tikzpicture}
\]
\caption{Non--diffeomorphic decompositions of the $2$--disk, with the decomposition corresponding to a standard local model on the left}
\label{local_models}
\end{figure}
\begin{Remark}
We can readily see from Definition \ref{def_multisection} that a multisection in dimension $3$ is a Heegaard splitting, and a multisection in dimension $4$ corresponds to a trisection, as usually defined. 
\end{Remark}
When constructing multisections of spun manifolds, we will first come up with a decomposition that matches all the criteria for a smooth multisection but one: the pieces are in fact handlebodies with smooth interior which are not submanifolds with corners, as their boundary presents some singularities not diffeomorphic to any type of corner. We now give a proper definition for such a decomposition. 
\begin{Definition}
\label{def_almost_smooth_multisection}
For $n \geq 1$, let $Y$ be an $(n+1)$--dimensional closed smooth manifold. An \emph{almost smooth} $n$--section of $Y$ is a decomposition $Y= \bigcup_{i=1}^n Y_i$, such that, denoting by $Y_I$ the intersection $\cap_{i \in I} Y_i$:
\begin{itemize}
\item $Y_{\lbrace 1,...,n \rbrace}$ is a closed smooth surface;
\item for any $I \subset \lbrace 1,...,n \rbrace$ with $\lvert I \rvert \leq n-1$, there exists an ambient isotopy $F_I$ of $Y$, smooth on each $\mathring{Y}_J$ for all $J \supseteq I$, such that $F_I(Y_I,1) \subset Y_I$ is a smooth $(n - \lvert I \rvert +2)$--dimensional handlebody.
\end{itemize}
\end{Definition}
\begin{Remark}
An almost smooth multisection induces a stratification of its pieces: $Y_I = \sqcup_{J \supseteq I} \mathring{Y}_J$, as well as an almost smooth multisection of the boundary of a smoothed piece: $\partial (F_I(Y_I,1)) = \cup_{J \supset I} F_I(Y_J,1)$. 
\end{Remark}
\begin{Remark}
\label{rmk_smoothing}
A smooth multisection is an almost smoothed multisection, as shown in \cite[Section~2]{aribi2023multisections}: for any $I$ with $\lvert I \rvert \leq n-1$, we can choose a smooth vector field in $Y$, transverse to each top--dimensional stratum of $\partial Y_I$. Then all the hypersurfaces in $Y_I$ (that is to say, the $(n- \lvert I \rvert+1)$--dimensional smooth submanifolds of $Y$ included in $\mathring{Y}_I$) that are homeomorphically mapped to $\partial Y_I$ under the flow of this vector field are canonically isotopic. This produces an isotopy from $Y_I$ to a smoothing of $Y_I$ sitting inside $Y_I$, and this isotopy is smooth on each $\mathring{Y}_J$ for all $J \supseteq I$.   
\end{Remark}
An interesting feature of multisections is that they can be described diagrammatically. 
\begin{Definition}
\label{cut_system}
Let $H$ be a 3–dimensional handlebody of genus $g$. A \emph{cut system} for $H$ is a family of $g$ disjoint, simple, homologically independent closed curves $\lbrace \alpha_{1},..., \alpha_{g} \rbrace$ on $\partial H$, such that each $\alpha_i$ bounds a disk in $H$. 
\end{Definition}
\begin{Definition}
\label{multisection_diagrams}
Let $Y=\bigcup_{i=1}^n Y_{i}$ be a smooth $n$--sected $(n+1)$--dimensional manifold. Denote by $\Sigma$ the central surface (of genus $g$) of the multisection. Choose, for $1 \leq i \leq n$, a cut system $\alpha^i = (\alpha_{j}^{i})_{1 \leq j \leq g}$ on $\Sigma$ for the $3$--dimensional handlebody $\cap_{k \neq i} Y_{k}$. Then $\lbrace \Sigma, \alpha^1,...,\alpha^n \rbrace$ is an \emph{\mbox{$n$--section} diagram} for the multisection. 
\end{Definition}
\begin{Remark}
\label{rmk_uniqueness}
A multisection diagram is not unique, but each system of curves $(\alpha_{j}^{i})$ is unique up to permutation and handleslides. Therefore the  diagram associated to a multisected manifold is unique up to diffeomorphism of the central surface and handleslides (performed independently within each system of curves).
\end{Remark}
There is a one-to-one correspondence between $n$--section diagrams (up to the transformations described in Remark \ref{rmk_uniqueness}) and $n$--sected PL manifolds (up to multisection preserving PL--homeomorphism), but the corresponding statement for smooth manifolds is true only for $n \leq 5$. For details, see \cite[Section~3]{aribi2023multisections}.
\section{Multisections of $m$--spun $3$--manifolds}
\label{secproof}
Let $M$ be a smooth $3$--manifold, together with a genus--$g$ Heegaard splitting $M=H_{\delta} \cup_{\Sigma} H_{\epsilon}$. Let $M^o$ be $M \setminus Int(B)$, where $B$ is an embedded $3$--ball inside of $M$. We assume that $B \subset Int(H_{\delta})$. We want to construct an $(m+2)$--section for $\mathcal{S}_{m}(M) = ( M^{o} \times S^{m}) \cup_{S^{2} \times S^{m}} (S^{2} \times B^{m+1})$. We obtain that $\mathcal{S}_{m}(M) =  \big((H_{\delta}^o \times S^{m}) \cup_{S^{2} \times S^{m}} (S^{2} \times B^{m+1}) \big) \cup_{\Sigma \times S^m} (H_{\epsilon} \times S^m) $, with $H_{\delta}^o = H_{\delta} \setminus Int (B)$.
\\
First, we decompose $B^{m+1}$ (see Figure \ref{fig_decomp_base}) into a collection of $m+2$ balls of dimension $m+1$, denoted by $(B_i)_{1 \leq i \leq m+2}$, such that:
\begin{itemize}
\item for $1\leq t \leq m+2$, the intersection of $t$ $B_i$'s is an $(m+2-t)$--ball;
\item each $B_i$ intersects $\partial B^{m+1}$ in an $m$--dimensional ball.
\end{itemize} 
Such a decomposition is obtained by projecting on $B^{m+1}$ the standard decomposition of the simplex $\Delta^{m+1}$. Considering $B^{m+1}$ as the union of its rays $r$ (i.e. segments from the origin to the boundary of $B^{m+1}$), we denote by $\partial^+ r$ the intersection of a ray $r$ with $\partial B^{m+1}=S^{m}$. Therefore $S^{m}= \cup_{r \subset B^{m+1}}\partial^+r$. In what follows, we denote by $N_r$ the copy of any subset $N$ of $M^o$ at $\partial^+ r$: $N_r = N \times \lbrace \partial^+ r \rbrace \subset M^o \times S^m$.
\begin{figure}[h!]
\[
\begin{tikzpicture}[scale=0.5]
\node (mypic) at (0,0) {\includegraphics[scale=0.5]{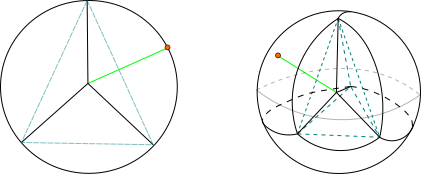}};
\node (a) at (-0.5,1.4) {$\partial^+ r$};
\node (a) at (1.6,1.4) {$\partial^+ r$};
\node (a) at (-3,-3) {$m=1$};
\node (a) at (3.5,-3) {$m=2$};
\end{tikzpicture}
\]
\caption{Decompositions of a $2$--disk and a $3$--ball, obtained from the standard decomposition of a simplex (one green ray $r$ is drawn on each)}
\label{fig_decomp_base}
\end{figure}
\\
Second, we decompose each $M^o_r = M^o \times \lbrace \partial^+r \rbrace$ into a $3$--dimensional handlebody and a collection of $3$--balls. To that extent, we consider $H_\delta^o$ as the union of a $2$--sphere times an interval and $g$ \mbox{$3$--dimensional} $1$--handles $(h_{\ell})_{1 \leq \ell \leq g}$. Then we define $(m+2)g$ $3$--dimensional tubes (solid cylinders) $h_\ell^i \subset h_{\ell}$, for $1 \leq i \leq m+2$ and $1 \leq \ell \leq g$, as follows. For all $1 \leq i \leq m+2$, we choose $g$ arcs in $H_\delta^o$, so that each arc is parallel to the core of one handle $h_{\ell}$, and is naturally extended to the spherical boundary component of $H_\delta^o$, using the product structure $2$--sphere times interval described above. Then one $h_\ell^i$ is defined as a regular neighborhood of such an arc in $H_\delta^o$, with the requirement that the resulting $(m+2)g$ tubes are pairwise disjoint (see Figure \ref{surger_handlebody}).
Notice that, for every $i$, the closure of $M^o$ minus the $g$ tubes $h_\ell^i$ is a regular neighborhood of $H_\epsilon$ in $M^o$.\\
Third, we notice that each tube $h_\ell^i$ intersects the spherical boundary component of $H_\delta^o$ in two disks. For a fixed ray $r$ of $B_i$, we can naturally extend our decomposition of $H_{\delta,r}^o$ to $H_{\delta,r}^o \cup (S^2 \times r)$, by defining a tube as the union of $h_\ell^i$ and these disks times $r$. We still denote by $h_{\ell,r}^i$ such a tube in $H_{\delta,r}^o \cup (S^2 \times r)$ (see Figure \ref{decomposition_fiber}). Then, fixing $i$, we define $\mathcal{H}_{\delta,r}^i$ as the closure of $H_{\delta,r}^o \cup (S^2 \times r)$ minus the tubes $h_{\ell,r}^i$. Finally, we set $\mathcal{H}_{\epsilon,r}^i = H_{\epsilon,r} \cup_{\Sigma_r} \mathcal{H}_{\delta,r}^i$. Then   $H_{\epsilon,r}$ is a deformation retract of $\mathcal{H}_{\epsilon,r}^i$.\\
As we want to keep notations as compact as possible, we denote by:
\begin{itemize}
\item $h_r^i$ the disjoint union of the $g$ tubes $h_{\ell,r}^i$;
\item $D_+ \sqcup D_-$ the projections of the positive and negative attaching regions of the handles $\lbrace h_{\ell} \rbrace_{1 \leq \ell \leq g}$ on the central copy of $S^2$: $S^2 \times \lbrace 0 \rbrace \subset S^2 \times B^{n+1}$, and by $D_{+}^i$ (resp. $D_{-}^i$) the disjoint union of the $g$ pairs of disks corresponding to the intersections of the $h_{\ell,r}^i$'s with $D_+$ (resp. $D_-$). See Figure \ref{decomposition_fiber}.
\end{itemize}

\begin{figure}[h!]
\[
\begin{tikzpicture}[scale=0.5]
\node (mypic) at (0,0) {\includegraphics[scale=0.5]{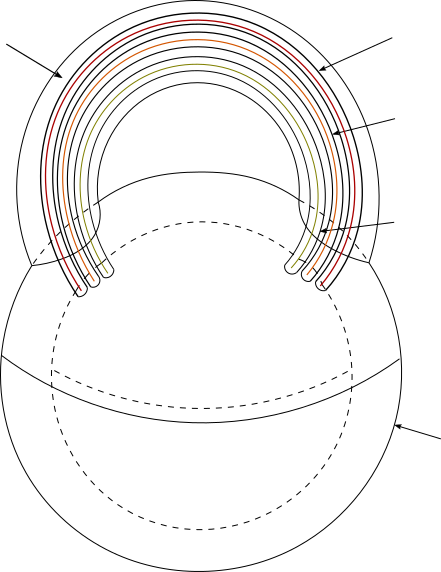}};
\node (a) at (6.2,-4) {$\Sigma$};
\node (a) at (5.2,4.5) {$h^{2}_{\ell}$};
\node (a) at (4.6,7) {$h^{1}_{\ell}$};
\node (a) at (4.9,2) {$h^{3}_{\ell}$};
\node (a) at (-6,6.7) {$h_{\ell}$};
\end{tikzpicture}
\]
\caption{Tubes $h^{i}_{\ell}$ in $H_{\delta}^o$ (only one of the $\ell$ handles is drawn and $m=1$)}
\label{surger_handlebody}
\end{figure}

\begin{figure}[h!]
\[
\begin{tikzpicture}[scale=0.5]
\node (mypic) at (0,0) {\includegraphics[scale=0.5]{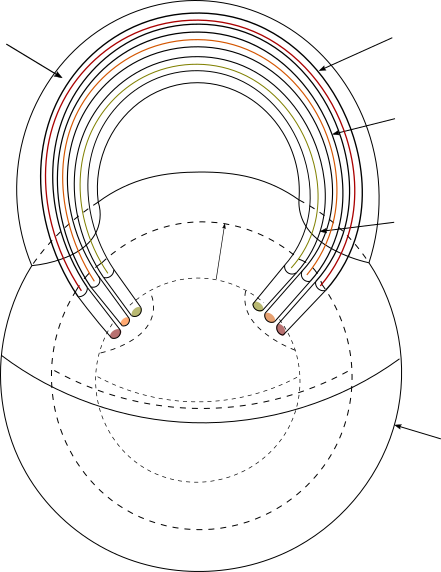}};
\node (a) at (0.1,1.1) {$r$};
\node (a) at (6.5,-4) {$\Sigma$};
\node (a) at (4.6,7) {$h_{\ell, r}^1$};
\node (a) at (5,5) {$h_{\ell, r}^2$};
\node (a) at (5.3,2) {$h_{\ell, r}^3$};
\node (a) at (-6,6.7) {$h_{\ell, r}$};
\node (a) at (-0.2,-5) {$S^2 \times \lbrace 0 \rbrace$};
\node (a) at (-1.5,-1.5) {$D_+$};
\node (a) at (1,-1.5) {$D_-$};
\end{tikzpicture}
\]
\caption{Extending the tubes inside $H_{\delta,r}^o \cup (S^2 \times r)$, with the coloured disks $D_{+}^i \subset D_+$ and $D_{-}^i \subset D_-$}
\label{decomposition_fiber}
\end{figure}
For $1 \leq i \leq m+2$ (considering indices modulo $(m+2)$), we define $Y_{i}= (\bigcup_{r \in B_i} \mathcal{H}_{\epsilon, r}^i) \cup (\bigcup_{r \in B_{i+1}}  h_{r}^{i+1})$. Therefore $\mathcal{S}_{m}(M)= \bigcup_{i=1}^{m+2} Y_{i}$. In what follows, for any collection of sets $\lbrace Z_1,...,Z_s \rbrace$ and any subset $K$ of $\lbrace 1,...,s\rbrace$, we denote by $Z_K$ the intersection $\cap_{k \in K} Z_k$. 

\begin{Proposition}
\label{prop_multisection}
The decomposition $\mathcal{S}_{m}(M)= \bigcup_{i=1}^{m+2} Y_{i}$ is an $(m+2)$--section of $\mathcal{S}_{m}(M)$, of genus $(m+2)g$. 
\begin{proof}
We want to show that $Y_{\lbrace 1,...,m+2 \rbrace}$ is a surface, and that $Y_K$ is a handlebody of dimension $m+4-\lvert K \rvert$ if $\lvert K \rvert \leq m+1$. It is convenient to start with sets of consecutive indices, and then move on to the general case. Note that $Y_K$ is diffeomorphic to $Y_{\lbrace 1,...k \rbrace}$ if $K$ is a set of $k$ consecutive indices.\\
First, we observe that $Y_i = (\bigcup_{r \in B_i} \tilde{M}_{r}^o \setminus h_{r}^i) \cup (\bigcup_{r \in B_{i+1}} h_{r}^{i+1})$, where $\tilde{M}_{r}^o = M_{r}^o \cup (S^2 \times r)$. 
Let $1 \leq k \leq m+2$. Using the fact that the $h_{r}^i$'s are pairwise disjoint, we decompose $Y_{\lbrace 1,...,k \rbrace}$ into three subsets:
\begin{enumerate}
\item $\mathcal{A}_{\lbrace 1,...k \rbrace} = \bigcup_{r \subset B_{\lbrace 1,...,k \rbrace}}  \overline{\tilde{M}_{r}^o \setminus (h_{r}^1 \cup ... \cup h_{r}^k})$ if $k \leq m+1$; $\mathcal{A}_{\lbrace 1,...,m+2 \rbrace}=\overline{S^2 \times \lbrace 0 \rbrace \setminus \cup_{i=1}^{m+2} D_{\pm}^i}$;
\item $\mathcal{B}_{\lbrace 1,...,k \rbrace} = \bigcup_{r \subset B_{\lbrace 1,...,k-1 \rbrace}} \overline{\tilde{M}_{r}^o \setminus (h_{r}^1 \cup ... \cup h_{r}^{k-1})} \cap \bigcup_{r \subset B_{k+1}} h_{r}^{k+1}$; notice that, for $k \leq m+1$, $\mathcal{B}_{\lbrace 1,...,k \rbrace} = \bigcup_{r \subset B_{\lbrace 1, ..., k-1,k+1 \rbrace}} h_{r}^{k+1}$, and $\mathcal{B}_{\lbrace 1,...,m+2 \rbrace}=\overline{\partial h_{r= B_{\lbrace 1,...,m+1 \rbrace}}^1 \setminus D_{\pm}^1}$;
\item $\mathcal{C}_{\lbrace 1,...,k \rbrace} = \bigcup_{s=1}^{k-1} \big( \bigcup_{r \subset B_{\lbrace 1, ... , \hat{s}, ... , k \rbrace}} \overline{\tilde{M}_{r}^o \setminus (h_{r}^1 \cup ... \cup h_{r}^k)} \cap \bigcup_{r \subset B_{s+1}}h_{r}^{s+1} \big)$, where $\hat{s}$ means we omit the index $s$. We can see that $\mathcal{C}_{\lbrace 1,...,k \rbrace}= \bigcup_{s=1}^{k-1} \bigcup_{r \subset B_{\lbrace 1, ... , \hat{s}, ... , k \rbrace}} \overline{\partial h_{r}^{s+1} \setminus D_{\pm}^{s+1}}$.
\end{enumerate}
We start with $k=m+2$. In this case, the union $\mathcal{B}_{\lbrace 1,...m+2 \rbrace} \cup \mathcal{C}_{\lbrace 1,...m+2 \rbrace}$ is equal to a collection of $(m+2)g$ disjoint cylinders: $\bigcup_{s=1}^{m+2} \bigcup_{r = B_{\lbrace 1, ... , \hat{s}, ... , m+2 \rbrace}} \overline{\partial h_{r}^{s+1} \setminus D_{\pm}^{s+1}}$. As each of these cylinders is glued by both extremities to $\mathcal{A}_{\lbrace 1, ... ,m+2 \rbrace}$ along the boundary of a pair of disks in some $D_\pm^i$, we obtain a surface of genus $(m+2)g$ (see Figure \ref{central_surface}).\\
\begin{figure}[h!]
\[
\begin{tikzpicture}[scale=0.3]
\node (mypic) at (0,0) {\includegraphics[scale=0.3]{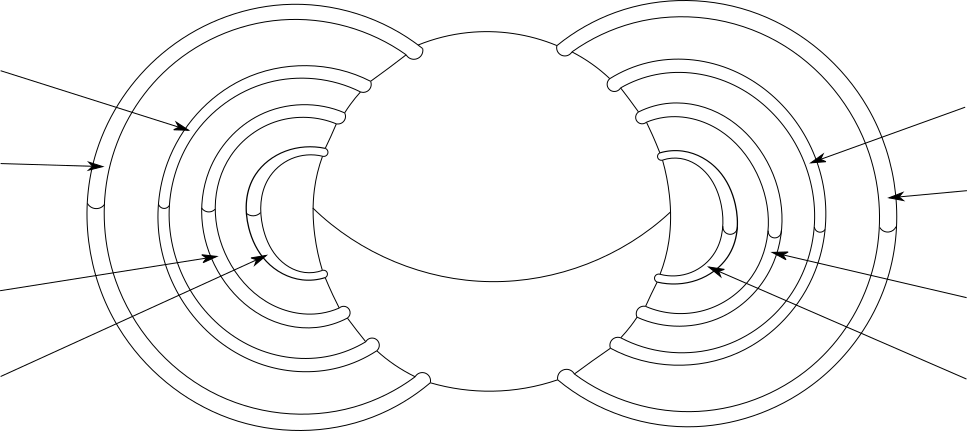}};
\node (a) at (16.5,-4.5) {$\partial h_{1,B_{\lbrace 1,...,m+1 \rbrace}}^1$};
\node (a) at (16.5,-2.6) {$\partial h_{1,B_{\lbrace 2,...,m+2 \rbrace}}^2$};
\node (a) at (17.5,3) {$\partial h_{1,B_{\lbrace 1,...,\hat{s},...,m+2 \rbrace}}^{s+1}$};
\node (a) at (17.5,0.8) {$\partial h_{1,B_{\lbrace 1,...,m,m+2 \rbrace}}^{m+2}$};
\node (a) at (-16.4,-4.5) {$\partial h_{2,B_{\lbrace 1,...,m+1 \rbrace}}^1$};
\node (a) at (-16.5,-2) {$\partial h_{2,B_{\lbrace 2,...,m+2 \rbrace}}^2$};
\node (a) at (-17.5,5) {$\partial h_{2,B_{\lbrace 1,...,\hat{s},...,m+2 \rbrace}}^{s+1}$};
\node (a) at (-17.5,1.8) {$\partial h_{2,B_{\lbrace 1,...,m,m+2 \rbrace}}^{m+2}$};
\node (a) at (8.29,0) {...};
\node (a) at (-7.85,0) {...};
\node (a) at (9.7,0) {...};
\node (a) at (-9.3,0) {...};
\node (a) at (0,-3) {$S^2 \times \lbrace 0 \rbrace$};
\end{tikzpicture}
\]
\caption{Central surface of the multisection, when $g=2$}
\label{central_surface}
\end{figure}
Then, if $k \leq m+1$, for a ray $r$ in $B_{\lbrace 1,...,k \rbrace}$, $\overline{\tilde{M}_{r}^o \setminus (h_{r}^1 \cup ... \cup h_{r}^k)}$ is equal to $\mathcal{H}_{\epsilon,r}$ minus the interior of $(k-1)g$ boundary parallel tubes of dimension $3$, therefore it is a $3$--dimensional handlebody $\mathcal{H}$, of genus $gk$. Then $\mathcal{A}_{\lbrace 1,...,k \rbrace}$ is diffeomorphic to $(\mathcal{H} \times B^{m-k+1}) / \sim$, where $\sim$ means that the central copies of $S^2$ minus the interior of the attaching spheres of the handles are identified for all $r$. Therefore $\mathcal{A}_{\lbrace 1,...,k \rbrace}$ is a genus $gk$, $(m-k+4)$--dimensional handlebody (with part of a collar neighbourhood of its boundary collapsed). Then we deal with $\mathcal{B}_{\lbrace 1,...,k \rbrace}$: it is a collection of $g$ disjoint $(m-k+4)$--dimensional balls, glued to $\mathcal{A}_{\lbrace 1,...,k \rbrace}$ along $\bigcup_{r \subset B_{\lbrace 1, ...,k+1 \rbrace}} h_{r}^{k+1}$. Therefore, if $k=m+1$, it is a collection of $g$ $3$--dimensional balls glued to $\mathcal{A}_{\lbrace 1,...,m+1 \rbrace}$ along two disks of their boundary, and we obtain that $\mathcal{A}_{\lbrace 1,...,m+1 \rbrace} \cup \mathcal{B}_{\lbrace 1,...,m+1 \rbrace}$ is a $3$--dimensional handlebody of genus $g(m+2)$. If $k \leq m$, each component of $\mathcal{B}_{\lbrace 1,...,k \rbrace}$ is an $(m-k+4)$--dimensional ball glued to $\mathcal{A}_{\lbrace 1,...,k \rbrace}$ along an $(m-k+3)$--dimensional ball in its boundary, therefore $\mathcal{A}_{\lbrace 1,...,k \rbrace} \cup \mathcal{B}_{\lbrace 1,...,k \rbrace} \simeq \mathcal{A}_{\lbrace 1,...,k \rbrace}$. As for $\mathcal{C}_{\lbrace 1,...,k \rbrace}$, it is just a thickening of the handles of $\mathcal{A}_{\lbrace 1,...,k \rbrace}$ and $\mathcal{B}_{\lbrace 1,...,k \rbrace}$.\\ 
Now we deal with the general case.\\ 
If $\lvert K \rvert =m+1$, we remove just one $Y_{i}$ from $\lbrace Y_{1},...,Y_{m+2} \rbrace$: any intersection of $(m+1)$ $Y_i$'s is diffeomorphic to $Y_{\lbrace 1,...,m+1 \rbrace}$. Therefore this intersection is always a $3$--dimensional handlebody of genus $g(m+2)$.\\
If $\lvert K \rvert \leq m$, we already showed that, for any subset of $K' \subset K$ constituted of consecutive indices, $Y_{K'}$ is an $(m-\lvert K' \rvert+4)$--dimensional handlebody of genus $g \lvert K' \rvert$. We can always split the set of indices $K$ between disjoint subsets of consecutive indices: $K=\lbrace K_1,...,K_s \rbrace$, with $max(K_i) < min(K_{i+1})$. Using again the fact that tubes of different indices are disjoint, we obtain a decomposition of $Y_K$ into three subsets:
\begin{enumerate}
\item $\mathcal{A}_K=\bigcup_{r \subset B_{K}}  \overline{\tilde{M}_{r}^o \setminus \cup_{k \in K} h_r^k}$, an $(m-\lvert K \rvert +4)$--dimensional handlebody of genus $g \lvert K \rvert$;
\item $\mathcal{B}_K = \bigcup _{1 \leq t \leq s} \bigcup_{r \subset B_{\tilde{K}}} h_r^{max(K_t)+1}$, where $\tilde{K} = (K \cup \lbrace max(K_t) +1 \rbrace ) \setminus \lbrace max(K_t) \rbrace$; it is a disjoint union of $(m-\lvert K \rvert +4)$--balls glued to $\mathcal{A}_K$ along an $(m-\lvert K \rvert +3)$--ball in their boundary, therefore $\mathcal{A}_K \cup \mathcal{B}_K \simeq \mathcal{A}_K$;
\item $\mathcal{C}_K = \bigcup_{k \in K} \bigcup_{ j \in K_k \setminus \lbrace max(K_k) \rbrace} \bigcup_{r \subset B_{K \setminus \lbrace j \rbrace}}  \overline{\partial h_{r}^{j+1} \setminus D_{\pm}^{j+1}}$, which only thickens some of the hand\-les of $\mathcal{A}_K$.
\end{enumerate}
Therefore, if $\lvert K \rvert \leq m$, $Y_{K}$ is an $(m-\lvert K \rvert +4)$--dimensional handlebody of genus $g \lvert K \rvert$.\\
We have thus proven that, in the PL setting, $\mathcal{S}_{m}(M)= \bigcup_{i=1}^{m+2} Y_{i}$ is an $(m+2)$--section of genus $(m+2) g$ of the $m$--spun of $M$. In the smooth setting, we need to slightly modify the construction. First, we need to work with a \emph{smooth} central surface. To address this point, we modify the decomposition of $H_{\delta,r}^o$: instead of simply extending each handle $h_{\ell,r}^i$ with a solid cylinder, we can use a smooth function to isotope the disks inside the product structure, and obtain a smooth surface. This does not change the general construction. Therefore the $Y_K$'s have smooth interior, although it does not mean that they are manifolds with corners. In fact, their boundary presents a singularity on their intersection with $\textbf{0} \times S^2$: around the central copy of $S^2$, the decomposition of the normal bundle to a $Y_K$ induced by the multisection switches gradually from a standard decomposition to a decomposition with one flat angle, as represented on Figure \ref{models_along_central_surface} (this flat angle is given by two collinear vectors with opposite directions, one pointing inside a tridimensional tube $h_{\ell,r}^i$, the other pointing inside a tridimensional handlebody $\mathcal{H}_{\epsilon, r}^i$, or equivalently, outside the tube $h_{\ell,r}^i$). 
\begin{figure}[h!]
\[
\begin{tikzpicture}[scale=0.5]
\node (mypic) at (0,0) {\includegraphics[scale=0.5]{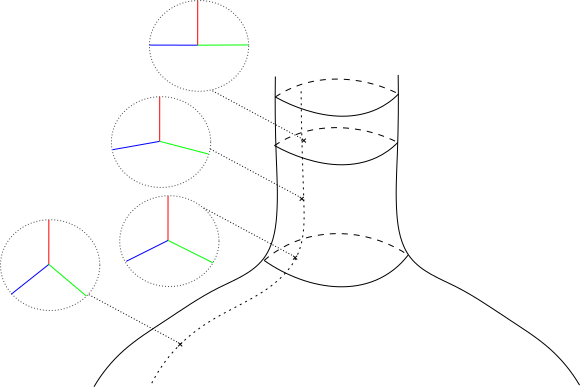}};
\node (a) at (6.2,-5) {$\Sigma$};
\node (a) at (-7,-1.6) {$Y_1$};
\node (a) at (-5.8,-1.6) {$Y_2$};
\node (a) at (-6.4,-2.6) {$Y_3$};

\node (a) at (-3.8,-1) {$Y_1$};
\node (a) at (-2.6,-1) {$Y_2$};
\node (a) at (-3.2,-2) {$Y_3$};

\node (a) at (-4,1.7) {$Y_1$};
\node (a) at (-2.8,1.7) {$Y_2$};
\node (a) at (-3.4,0.6) {$Y_3$};

\node (a) at (-3,4.3) {$Y_1$};
\node (a) at (-1.8,4.3) {$Y_2$};
\node (a) at (-2.4,3.2) {$Y_3$};
\end{tikzpicture}
\]
\caption{Decomposition of the normal bundle to the central surface $\Sigma$, induced by the almost smooth trisection of $\mathcal{S}_{1}(M)$}
\label{models_along_central_surface}
\end{figure}
However, our decomposition induces, for any $K$, a stratification $\partial W_K = \sqcup_{1 \in K} \mathring{W}_{K}$, and, as in Remark \ref{rmk_smoothing}, we can choose a smooth vector field $\chi$ on $\mathcal{S}_{m}(M)$, transverse to each top dimensional stratum of $\partial W_K$. Then all hypersurfaces embedded in the interior of $W_K$ which are homeomorphically mapped to $\partial W_K$ under the flow of $\chi$ are canonically isotopic (and inherit an almost smooth $(n - 1)$--section from the decomposition of $\partial W_1$). This defines an isotopy from $\partial W_K$ to any of these hypersurfaces, which is smooth on each $\mathring{Y}_J$ for $J \supseteq K$. Therefore we have produced an almost smooth multisection of $\mathcal{S}_{m}(M)$, from which we now derive a smooth multisection. Note that the normal bundle to the interior of a piece is trivial (see Proposition \ref{trivial_bundle} below). We start by modifying our decomposition on a neighborhood of the central surface. This amounts to a cut and paste operation (see Figure \ref{modifying_local_model} for an example in dimension $4$), that adds or removes a small ball along the boundary of each $Y_K$. 
\begin{figure}[h!]
\[
\begin{tikzpicture}[scale=0.5]
\node (mypic) at (0,0) {\includegraphics[scale=0.5]{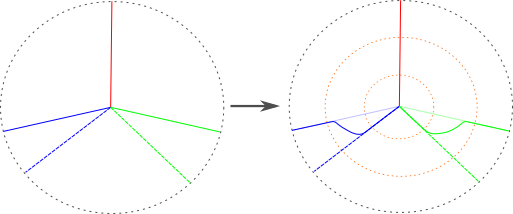}};
\node (a) at (-5,1.3) {$Y_1$};
\node (a) at (-2.7,1.3) {$Y_2$};
\node (a) at (-4,-1.5) {$Y_3$};
\node (a) at (-4.2,0.3) {$\Sigma$};

\node (a) at (2.7,1.3) {$Y_1$};
\node (a) at (5,1.3) {$Y_2$};
\node (a) at (4,-1.5) {$Y_3$};
\node (a) at (4.2,0.3) {$\Sigma$};

\end{tikzpicture}
\]
\caption{Modifying the decomposition of the normal bundle to the central surface $\Sigma$, induced by the almost smooth trisection of $\mathcal{S}_{1}(M)$}
\label{modifying_local_model}
\end{figure}
We obtain a new almost smooth multisection of $\mathcal{S}_{m}(M)$, that admits a standard decomposition of the normal bundle to the central surface. Next we can perform the same operation on a neighborhood of the tridimensional pieces, extending on this neighborhood the standard decomposition induced by the standard decomposition of the normal bundle to the central surface. We can thus work inductively until all the decompositions of the normal bundles to the $Y_K$'s induced by the multisection are standard. We obtain a smooth multisection of $\mathcal{S}_{m}(M)$, derived from the initial almost smooth multisection only by adding or removing balls along the boundaries of the $Y_K$'s.       
\end{proof} 
\end{Proposition}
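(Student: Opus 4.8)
The plan is to compute every intersection $Y_K$ directly, exploiting the cyclic symmetry of the indices to reduce first to $K=\{1,\dots,k\}$, a set of consecutive indices (any $k$ consecutive indices yield diffeomorphic intersections, so no generality is lost). The convenient starting point is the symmetric description $Y_i=\bigl(\bigcup_{r\in B_i}\overline{\tilde M_r^o\setminus h_r^i}\bigr)\cup\bigl(\bigcup_{r\in B_{i+1}}h_r^{i+1}\bigr)$, where $\tilde M_r^o=M_r^o\cup(S^2\times r)$. Since tubes carrying distinct superscripts are pairwise disjoint, intersecting these sets over $i\in\{1,\dots,k\}$ yields precisely three kinds of contributions, which I would package as $\mathcal A_{\{1,\dots,k\}}$ (the ``bulk'', coming from the factors $\overline{\tilde M_r^o\setminus(h_r^1\cup\cdots\cup h_r^k)}$ over $r\in B_{\{1,\dots,k\}}$), $\mathcal B_{\{1,\dots,k\}}$ (a term where the ray index jumps by one), and $\mathcal C_{\{1,\dots,k\}}$ (boundary cylinders of the tubes). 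This is bookkeeping, but it must be done carefully with indices modulo $m+2$ and remembering that $B_{\{1,\dots,k\}}$ is an $(m+2-k)$-ball inside $B^{m+1}$.

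I would treat the central surface ($k=m+2$) first: there $\mathcal A_{\{1,\dots,m+2\}}$ is the central $2$-sphere with $(m+2)$ copies of the doubled Heegaard attaching regions removed, while $\mathcal B\cup\mathcal C$ collapses to a disjoint union of $(m+2)g$ cylinders, each glued to that punctured sphere along both of its boundary circles; since the punctured sphere is connected and each cylinder is attached to it, the result is a connected closed orientable surface, of genus $(m+2)g$ by a handle count (Figure \ref{central_surface}). For $k\le m+1$, for a ray $r\in B_{\{1,\dots,k\}}$ the slice $\overline{\tilde M_r^o\setminus(h_r^1\cup\cdots\cup h_r^k)}$ is $\mathcal H_{\epsilon,r}$ with $(k-1)g$ boundary-parallel tubes deleted, hence a genus-$gk$ solid handlebody $\mathcal H$; thus $\mathcal A_{\{1,\dots,k\}}\cong(\mathcal H\times B^{m-k+1})/\!\sim$ is an $(m-k+4)$-dimensional genus-$gk$ handlebody (up to collapsing part of a collar), $\mathcal B_{\{1,\dots,k\}}$ attaches $g$ balls along a ball in their boundary (respectively along two disks when $k=m+1$, which turns $\mathcal A_{\{1,\dots,m+1\}}\cup\mathcal B_{\{1,\dots,m+1\}}$ into a genus-$g(m+2)$ solid handlebody), and $\mathcal C_{\{1,\dots,k\}}$ merely thickens handles. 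For general $K$ I would split it into maximal runs of consecutive indices and repeat the same $\mathcal A/\mathcal B/\mathcal C$ analysis verbatim; disjointness of tubes with different superscripts again makes $\mathcal A_K$ the bulk $(m-|K|+4)$-handlebody of genus $g|K|$, $\mathcal B_K$ a ball-on-ball attachment, and $\mathcal C_K$ a handle thickening, so $Y_K$ is an $(m-|K|+4)$-handlebody. This settles the PL statement.

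For the smooth case a first, cosmetic step is to isotope the capping disks of the tubes inside the product structure $S^2\times[0,1]$ by a smooth bump function, so that the central surface becomes genuinely smooth without changing any diffeomorphism type above. The real obstacle --- and the step I expect to be the crux --- is that the pieces are then not submanifolds with corners: along their intersection with the central copy $\mathbf{0}\times S^2$, the decomposition of the normal bundle induced by the multisection interpolates between a standard local model and one with a flat angle, produced by a $3$-dimensional tube meeting a $3$-dimensional handlebody along a two-sided disk (Figure \ref{models_along_central_surface}). To handle this I would first verify that the construction does give an \emph{almost smooth} $(m+2)$-section in the sense of Definition \ref{def_almost_smooth_multisection}: the stratification $\partial Y_K=\sqcup_{J\supseteq K}\mathring{Y}_J$ is already in place, and flowing a smooth vector field transverse to every top-dimensional stratum of $\partial Y_K$ yields, as in Remark \ref{rmk_smoothing}, a stratum-wise smooth isotopy carrying $Y_K$ onto a genuine smooth handlebody sitting inside it. Then, using that the normal bundle to the interior of each piece is trivial, I would upgrade this to an honest smooth multisection by an inductive cut-and-paste: begin at the central surface, adding or removing a small ball along each $\partial Y_K$ to replace the flat-angle model with the standard one (Figure \ref{modifying_local_model}), and then propagate outward through the $3$-dimensional pieces and upward in codimension, at each stage extending the standard local model already obtained. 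Since every modification only adds or deletes a ball along a boundary, none of the diffeomorphism types computed above change, so we end with a smooth $(m+2)$-section of genus $(m+2)g$.
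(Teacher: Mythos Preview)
Your proposal is correct and follows essentially the same approach as the paper: the same $\mathcal A/\mathcal B/\mathcal C$ decomposition of $Y_{\{1,\dots,k\}}$, the same handle-count for the central surface, the same identification of $\mathcal A_K$ as the bulk genus-$g|K|$ handlebody with $\mathcal B_K$ and $\mathcal C_K$ as ball-attachments and handle-thickenings, and the same passage from the PL to the smooth statement via the almost-smooth notion, triviality of the normal bundle to the pieces, and the inductive cut-and-paste straightening of the local models. The only differences are expository compression.
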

Now we show that the normal bundle to each piece of an almost smoothly multisected manifold $W$ is trivial, and therefore that each piece admits a product neighborhood in $W$. In what follows, we denote by $\nu^W N$ the normal bundle in $W$ to a smooth submanifold $N \subset W$. We rely on the following classical lemma (see for instance \cite[chapter~3]{kosinski2013differential}).
\begin{Lemma}
\label{bundle_lemma}
Let $N \subset N' \subset W$, with $N$, $N'$ and $W$ smooth. Then $\nu^W N = \nu^{N'} N \oplus \nu^W N'_{\lvert N}$. 
\end{Lemma}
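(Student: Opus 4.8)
The plan is to reduce everything to a single statement about tangent bundles along $N$ and then split off orthogonal complements. First I would fix an auxiliary Riemannian metric on $W$; this is harmless since $W$ is smooth (hence admits a partition of unity), and it makes each normal bundle in the statement literally realized as an orthogonal complement: $\nu^W N'$ is the orthogonal complement of $TN'$ inside $TW|_{N'}$, $\nu^{N'}N$ is the orthogonal complement of $TN$ inside $TN'|_{N}$ (using the induced metric on $N'$), and $\nu^W N$ is the orthogonal complement of $TN$ inside $TW|_{N}$. One should remark that the abstract quotient-bundle definition of the normal bundle is metric-independent, so the isomorphism class produced at the end does not depend on this choice; I would include a one-line comment to that effect.

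Next I would restrict the orthogonal splitting $TW|_{N'} = TN' \oplus \nu^W N'$ to the submanifold $N \subset N'$, obtaining
\[
TW|_{N} = TN'|_{N} \oplus \nu^W N'|_{N}.
\]
Then I would insert the splitting $TN'|_{N} = TN \oplus \nu^{N'}N$ coming from the induced metric on $N'$, which yields
\[
TW|_{N} = TN \oplus \nu^{N'}N \oplus \nu^W N'|_{N}.
\]
On the other hand, directly along $N$ inside $W$ we have $TW|_{N} = TN \oplus \nu^W N$. Comparing the two decompositions and taking orthogonal complements of the common summand $TN$ (which is unique) gives $\nu^W N \cong \nu^{N'}N \oplus \nu^W N'|_{N}$, as desired.

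I do not expect a serious obstacle here: the only point requiring a little care is making sure the various metrics are compatible, i.e. that the metric on $N'$ used to define $\nu^{N'}N$ is the one induced from $W$, so that the summand $\nu^{N'}N$ appearing after restriction really is the orthogonal complement of $TN$ in $TN'|_N$ for that induced metric; this is automatic once the single metric on $W$ is fixed and everything is equipped with the induced metrics. If one prefers to avoid metrics entirely, the alternative is to observe that the relevant short exact sequences of vector bundles over the paracompact base $N$ split, and that such splittings are unique up to isomorphism, which gives the same conclusion; I would mention this as a parenthetical but carry out the metric argument as the main line, since it is the shortest.
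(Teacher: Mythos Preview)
Your argument is correct and is the standard proof of this classical fact. Note that the paper does not actually prove this lemma: it simply states it and refers to \cite[chapter~3]{kosinski2013differential}. What you have written is essentially the argument one finds in such references, so there is nothing to compare beyond observing that your metric-based splitting (and the alternative short-exact-sequence remark) is exactly the expected justification.
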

\begin{Proposition}
\label{trivial_bundle}
Let $W=\bigcup_{i=1}^n W_i$ be an almost smoothly $n$--sected $(n+1)$--manifold. Then for all $K \subset \lbrace 1,...,n \rbrace$, the normal bundle to $\mathring{W_K}$ in $W$ is trivial.
\end{Proposition}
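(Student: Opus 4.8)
The plan is to reduce the triviality of $\nu^W \mathring{W_K}$ to the triviality of a normal bundle of a curve or surface, which is automatic, by walking up the flag of pieces $\mathring{W_K} \subset \mathring{W_{K'}} \subset \cdots$ and applying Lemma \ref{bundle_lemma} repeatedly. Concretely, fix $K$ with $\lvert K \rvert = k$. Choose an index $i_1 \in \{1,\dots,n\} \setminus K$ and set $K_1 = K \cup \{i_1\}$; then $\mathring{W_{K}} \subset W_{K_1}$ (after smoothing the larger piece via the ambient isotopy $F_{K_1}$ from Definition \ref{def_almost_smooth_multisection}, so that $W_{K_1}$ is an honest smooth handlebody and $\mathring{W_K}$ sits inside its interior). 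Iterating, pick a nested chain $K = K_0 \subset K_1 \subset \cdots \subset K_{n-k-1} \subset K_{n-k}$ where at each step we add one index, so $\lvert K_j \rvert = k+j$ and $K_{n-k} = \{1,\dots,n\}$, with $W_{K_{n-k}}$ the central surface $\Sigma$. By Lemma \ref{bundle_lemma} applied along $\mathring{W_K} \subset W_{K_1} \subset W$, and then recursively,
\[
\nu^W \mathring{W_K} \;=\; \nu^{W_{K_1}} \mathring{W_K} \;\oplus\; \bigl(\nu^{W} W_{K_1}\bigr)\big|_{\mathring{W_K}} \;=\; \cdots \;=\; \Bigl(\bigoplus_{j=0}^{n-k-1} \bigl(\nu^{W_{K_{j+1}}} W_{K_j}\bigr)\big|_{\mathring{W_K}}\Bigr) \;\oplus\; \bigl(\nu^{W} \Sigma\bigr)\big|_{\mathring{W_K}}.
\]
So it suffices to show each summand is trivial. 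Each $W_{K_j}$ is (after smoothing) a smooth handlebody of some dimension, hence parallelizable and orientable, and it sits as a codimension-one or codimension-two submanifold of the next piece in the chain; I would argue that a handlebody properly embedded in another handlebody with the bundle-boundary structure coming from the multisection has trivial normal bundle because a handlebody deformation retracts onto a wedge of circles, over which every vector bundle is trivial (line bundles: use orientability of $W$ and of the pieces; plane bundles: trivial over a graph).

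The genuinely delicate point — and the one I expect to be the main obstacle — is handling the passage through the central surface $\Sigma$, i.e. the last summand $(\nu^W\Sigma)|_{\mathring{W_K}}$, together with the fact that $\mathring{W_K}$ is not closed: its closure hits $\Sigma$, where the "smooth submanifold of $W$" hypothesis of Lemma \ref{bundle_lemma} could fail. I would circumvent this by working with $\mathring{W_K}$, which by definition lies in the open stratum and is an honestly smooth, open submanifold of $W$; then one must only verify that $\mathring{W_K}$ is homotopy equivalent to (the interior of) a handlebody, hence again to a wedge of circles, so that any bundle over it — in particular $\nu^W\mathring{W_K}$, of rank $n+1 - \dim\mathring{W_K}$ — is trivial as soon as it is orientable, and orientability follows from orientability of $W$ and of $\mathring{W_K}$ (the latter being an open subset of a handlebody). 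This actually shortens the argument: once we know $\mathring{W_K}$ has the homotopy type of a graph, triviality of every vector bundle over it is immediate, and the flag-of-pieces discussion is needed only to confirm orientability (equivalently, that $W$ is orientable — which is a standing assumption — and that $\mathring{W_K}$ is orientable).

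Thus the key steps, in order, are: (1) replace $W_K$ by its smoothing and observe $\mathring{W_K}$ is an open smooth submanifold of $W$ whose interior is diffeomorphic to the interior of a handlebody, hence homotopy equivalent to a wedge of circles; (2) conclude $\nu^W\mathring{W_K}$ is determined up to isomorphism by its orientation class in $H^1(\mathring{W_K};\mathbb{Z}/2)$, and even that class is irrelevant once we check the bundle is orientable; (3) verify orientability from the orientability of $W$ and of the pieces, possibly invoking the Lemma \ref{bundle_lemma} splitting along a chain $K \subset K_1 \subset \cdots \subset \{1,\dots,n\}$ to express the orientation class in terms of orientation classes of the $W_{K_j}$'s, each of which is an orientable handlebody; (4) assemble: a trivial-rank-$r$ orientable bundle over a wedge of circles is trivial. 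The only real care required is in step (3): making sure the chain of inclusions of smoothed pieces really are smooth submanifold inclusions over the open strata so that Lemma \ref{bundle_lemma} applies cleanly, and that the identifications are compatible with restriction to $\mathring{W_K}$; everything else is formal.
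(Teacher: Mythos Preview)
Your chain $K \subset K_1 \subset \cdots$ has the containments the wrong way round: adding an index $i_1 \notin K$ gives $W_{K_1} = W_K \cap W_{i_1} \subset W_K$, so $W_{K_1}$ sits in the \emph{boundary} of $W_K$ and $\mathring{W_K} \not\subset W_{K_1}$. The iterated splitting via Lemma~\ref{bundle_lemma} along that chain therefore does not parse as written.

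Your fallback direct argument---$\mathring{W_K}$ has the homotopy type of a wedge of circles, and an orientable real vector bundle over a $1$-complex is trivial---is correct and pleasantly short \emph{when $\lvert K \rvert \leq n-1$}. But for $K=\{1,\dots,n\}$ the piece $\mathring{W_K}$ is the closed central surface $\Sigma$, which is not homotopy equivalent to a graph, and orientable bundles over a surface need not be trivial (e.g.\ $TS^2$). The proposal never treats this case, yet the proposition is stated for all $K$ and is invoked in the paper precisely to obtain a product neighbourhood of $\Sigma$.

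The paper avoids both issues by a different route: induction on $n$. One smooths the codimension-zero piece $W_1$ to an honest handlebody $W_1^s\subset W$; its boundary $\partial W_1^s$ then carries an induced almost smooth $(n-1)$-section with pieces $F_1(W_{\{1,j\}},1)$. For any $K\ni 1$ one applies Lemma~\ref{bundle_lemma} along $\mathring{W_K^s}\subset\partial W_1^s\subset W_1^s\subset W$ (inclusions now in the correct direction):
\[
\nu^W \mathring{W_K}\;\cong\;\nu^{W_1^s}\mathring{W_K^s}\;=\;\bigl(\nu^{W_1^s}\partial W_1^s\bigr)\big|_{\mathring{W_K^s}}\;\oplus\;\nu^{\partial W_1^s}\mathring{W_K^s},
\]
the first summand trivial by the collar of $\partial W_1^s$ and the second trivial by the inductive hypothesis applied to the $(n-1)$-sected closed manifold $\partial W_1^s$. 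This handles $\Sigma$ and all other pieces uniformly. (Your approach can be rescued for $\Sigma$ by taking a smoothed $3$-dimensional piece $H^s$ with $\partial H^s\cong\Sigma$, observing $\nu^W H^s$ is trivial by your wedge-of-circles argument since $H^s$ retracts onto a graph, and then restricting to the boundary and adding a collar line---but that step is missing.)
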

\begin{proof}
Without loss of generality, we consider that $K=\lbrace 1,...,k \rbrace$. We prove the proposition by induction on $n$. To apply Lemma \ref{bundle_lemma}, we need to work with smooth submanifolds of $W$. So consider $\mathring{W_{K}} \subset W_1$: by definition of an almost smooth multisection, there exists an ambient isotopy $F_1$ of $W$ such that $F_1 (W_1,1) \subset W_1$ is a smooth handlebody, that we will denote by $W_{1}^s$. Moreover, $\partial W_{1}^s$ inherits an almost smooth $(n - 1)$--section from the decomposition of $\partial W_1$: $\partial W_1^s= \cup_{j \geq 2} W_{1,j}^s$, where $W_{1,j}^s = F_1 (W_{1,j})$. As, by definition, $F_1$ is continuous on $\partial W_1$ and smooth on each $\mathring{W_J} \subset \partial W_1$, $\mathring W_K$ is smoothly isotopic to $\mathring{W_K^s}$ in $W$, therefore (\cite[chapter~3]{kosinski2013differential}, for instance), the normal bundle of $\mathring{W_K}$ in $W$ is isomorphic to the normal bundle of $\mathring{W_K^s}$ in $W$.    
Now we can prove the proposition by induction on $n$. For $n=1$, the proposition is trivial. If the proposition is true for any $n$--sected smooth $(n+1)$--manifold, then consider a smooth $(n+1)$--sected $(n+2)$--manifold and $1 \leq k \leq n+1$. Using Lemma \ref{bundle_lemma}, we have: $\nu^W \mathring{W_K} \simeq \nu^W \mathring{W_K^s} = \nu^W {W_1^s}_{\lvert \mathring{W_K^s}} \oplus \nu^{W_1^s} \mathring{W_K^s}= \nu^{W_1^s} \mathring{W_K^s} = \nu^{W_1^s} {\partial W_1^s}_{\lvert \mathring{W_K^s}} \oplus \nu^{\partial W_1^s}\mathring{W_K^s}$. But $\nu^{W_1^s} {\partial W_1^s}_{\lvert \mathring{W_K^s}}$ is trivial because a boundary admits a collar neighborhood, and $\nu^{\partial W_1^s}\mathring{W_K^s}$ is trivial by induction.
\end{proof}
\begin{Remark}
As a smooth multisection is an almost smooth multisection (see Remark \ref{rmk_smoothing}), we have shown that the normal bundle to the interior of a piece of a smooth multisection is trivial.
\end{Remark}
\begin{Remark}
The proof of Proposition \ref{prop_multisection} allows to compute the genera of the handlebodies $Y_K$ involved, which are: $(m+2)g$ for $\lvert K \rvert = m+1$ ($3$--dimensional intersections), and $\lvert K \rvert g$ for $m \geq \lvert K \rvert \geq 1$, which shows that the decomposition is \emph{balanced} (if $\lvert K \rvert = \lvert I \rvert$, $Y_K$ and $Y_I$ have same genus).
\end{Remark}
\section{Associated multisection diagrams} 
\label{section_diagrams}
To construct the multisection diagrams associated to the decompositions of Section \ref{secproof}, we have to produce, for every $i$, a cut system $\alpha^i$ of the central surface corresponding to $\bigcap_{j \neq i} Y_{j}$. Equivalently, we need to find, for every $\bigcap_{j \neq i} Y_{j}$, a system of $(m+2)g$ curves on the central surface that bound disks in $\bigcap_{j \neq i} Y_{j}$, and such that surgering the central surface along these curves produces $S^2$. We adapt the process described in \cite{MR3917737} to any dimension. We start from a Heegaard diagram for $M$, denoted by $\lbrace \Sigma, \delta, \epsilon \rbrace$, such that $H_{\delta}$ is \emph{standardly embedded} in $S^3$: Figure \ref{standard_diagram} shows an embedding of $\Sigma$ in $S^3$, together with two sets of curves $\mu$ and $\lambda$; the Heegaard diagram $\lbrace \Sigma, \delta, \epsilon \rbrace$ defines a handlebody $H_{\delta}$ standardly embedded in $S^3$ if, with $\Sigma$ embedded as in Figure \ref{standard_diagram}, either $\delta = \mu$ or $\delta = \lambda$. It is a well-known fact that a $3$--manifold always admits such a Heegaard diagram.
\begin{figure}[h!]
\[
\begin{tikzpicture}[scale=0.5]
\node (mypic) at (0,0) {\includegraphics[scale=0.5]{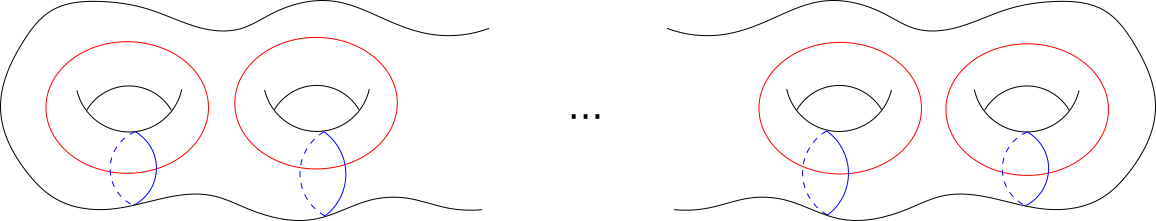}};
\end{tikzpicture}
\]
\caption{The cut systems $\mu$ (in blue) and $\lambda$ (in red) on the Heegaard surface $\Sigma$}
\label{standard_diagram}
\end{figure}
\begin{Proposition}
\label{prop_diagram}
Consider a genus--$g$ Heegaard diagram $\lbrace \Sigma, \delta, \epsilon \rbrace$ for $M$, such that $H_{\delta}$ is standardly embedded in $S^3$, with the $\delta$--curves system equal to the $\lambda$--curves system in Figure \ref{standard_diagram}. A multisection diagram for $\mathcal{S}_m(M)$ can be obtained as follows.
\begin{itemize}
\item Consider an embedding of the central surface as in Figure \ref{model_surface}, where two sets of curves $\lbrace \lambda_j^k \rbrace_{1 \leq j \leq m+2}^{1 \leq k \leq g}$ and $\lbrace \mu_j^k \rbrace_{1 \leq j \leq m+2}^{1 \leq k \leq g}$ are defined.
\item For each $1 \leq i \leq m$, construct the $\alpha^i$--curves in two steps (the indices $j \in \lbrace 1,...,m+2 \rbrace$ are considered modulo $m+2$).
\begin{enumerate}
\item The first $mg$ curves are the $\lbrace \mu_j^k \rbrace_{i \leq j \leq m+i-1}^{1 \leq k \leq g}$, the next $g$ curves are the $\lbrace \lambda_{i+m+1}^k \rbrace_{1 \leq k \leq g}$ (see Figure \ref{model_surface} and \ref{first_step_example}). 
\item Identifying the $\lambda$--curves of the Heegaard diagram with the $g$ curves obtained by sliding $\lambda_{i}^k$ over each of the $\lambda_{j}^k$, for $i+1 \leq j \leq m+i$ and every $1 \leq k \leq g$, defines a homeomorphism from $\Sigma$ to the central surface surgered along the first $(m+1)g$ $\alpha^i$--curves. The last $g$ $\alpha^i$--curves are the images of the $\epsilon$--curves by this homeomorphism, isotoped on the central surface so that they are disjoint from the curves previously drawn. 
\end{enumerate}
\end{itemize}
If the $\delta$--curves system is equal to the $\mu$--curves system in Figure \ref{standard_diagram}, proceed in the same way, only changing step $1$ as follows: \emph{the first $mg$ curves are the $\lbrace \lambda_j^k \rbrace_{i \leq j \leq m+i-1}^{1 \leq k \leq g}$, the next $g$ curves are the $\lbrace \mu_{m+i}^k \rbrace_{1 \leq k \leq g}$}, and adapting the second step accordingly.
\end{Proposition}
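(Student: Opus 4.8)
The plan is to establish the proposition by showing, for each $i$, that the prescribed family $\alpha^i$ is a cut system (Definition~\ref{cut_system}) for the $3$--dimensional handlebody $\bigcap_{j\ne i}Y_j$; by Definition~\ref{multisection_diagrams} this is exactly what is needed. The starting point is the explicit handlebody structure produced in the proof of Proposition~\ref{prop_multisection}: writing $K=\{1,\dots,m+2\}\setminus\{i\}$ (a single cyclic run of $m+1$ indices), $\bigcap_{j\ne i}Y_j$ is diffeomorphic to $Y_{\{1,\dots,m+1\}}$ and decomposes as $\mathcal A_K\cup\mathcal B_K\cup\mathcal C_K$, where $\mathcal A_K$ is obtained from a copy of $H_\epsilon$ (the genus--$g$ handlebody onto which each $\mathcal H_{\epsilon,r}$ deformation retracts) by drilling out $mg$ boundary--parallel tubes, $\mathcal B_K$ is a disjoint union of $g$ one--handles (the tube family $h^i$ over the appropriate rays), and $\mathcal C_K$ is a collar thickening of some of these handles. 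In particular $\Sigma=Y_{\{1,\dots,m+2\}}$ is the sphere $S^2\times\{0\}$ with $(m+2)g$ tubes attached, organised into $g$ clusters of $m+2$ parallel tubes. The first task is to identify this surface with the model surface of Figure~\ref{model_surface}, compatibly with the cyclic indexing of the $Y_i$'s, so that the tube family $h^j$ carries the meridian $\mu^k_j$ and the longitude $\lambda^k_j$ appearing there.

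I would then treat the first $(m+1)g$ curves of $\alpha^i$. Each of the $mg$ drilled tubes of $\mathcal A_K$ has a meridian disk, and each of the $g$ one--handles of $\mathcal B_K$ has a co--core disk; the boundaries of these $(m+1)g$ disks are pairwise disjoint simple closed curves on $\Sigma$ (the tubes being pairwise disjoint by construction), each bounding a disk in $\bigcap_{j\ne i}Y_j$. Carried through the identification with Figure~\ref{model_surface}, they are exactly the $mg$ curves $\{\mu^k_j\}_{i\le j\le m+i-1}$ together with the $g$ curves $\{\lambda^k_{i+m+1}\}_k$, the last family reading as a longitude (rather than a meridian) because of the way its tube is attached to $S^2\times\{0\}$ in the model picture. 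Compressing $\bigcap_{j\ne i}Y_j$ along all $(m+1)g$ of these disks fills back the drilled tubes and cancels the one--handles, hence deformation retracts $\bigcap_{j\ne i}Y_j$ onto its copy of $H_\epsilon$; consequently the surface obtained from $\Sigma$ by surgery along these $(m+1)g$ curves is a genus--$g$ surface diffeomorphic to the Heegaard surface of $M$, and identifying the two is the remaining task.

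For that, since $H_\delta$ is standardly embedded with $\delta=\lambda$, the core of each handle of $H_\delta$ appears inside $\bigcap_{j\ne i}Y_j$ as a band sum of the $m+1$ parallel sub--arcs it was cut into; dually, this identifies the $\delta$--curves with the curves obtained by sliding $\lambda^k_i$ over $\lambda^k_{i+1},\dots,\lambda^k_{i+m}$---the handleslide prescription of step~2. Under the resulting homeomorphism the $\epsilon$--curves become $g$ curves on the surgered surface; lifting them to $\Sigma$ and putting them in general position away from the first $(m+1)g$ curves (possible since, off those curves, $\Sigma$ retracts onto the surgered surface) produces the last $g$ curves of $\alpha^i$, which bound the meridian disks of the copy of $H_\epsilon$ in $\bigcap_{j\ne i}Y_j$. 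Because $\epsilon$ is a cut system for $H_\epsilon$, surgery of the genus--$g$ surface along these curves gives $S^2$; hence surgery of $\Sigma$ along all $(m+2)g$ curves of $\alpha^i$ gives $S^2$, so the curves are homologically independent and $\alpha^i$ is a cut system for $\bigcap_{j\ne i}Y_j$. Doing this for every $i$ yields, by Definition~\ref{multisection_diagrams}, a multisection diagram for $\mathcal S_m(M)$. The case $\delta=\mu$ is handled identically after applying the self--diffeomorphism of the standard picture of Figure~\ref{standard_diagram} that exchanges $\mu$ and $\lambda$; this exchange, together with the accompanying shift of which tube family is ``structural'' (and so carries no standard curve), is exactly the modification of step~1 stated in the proposition.

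The main obstacle is the bookkeeping threaded through the two middle steps: matching the cyclic indices of the $Y_i$'s (and of the tube families $h^i$, shifted by one relative to the piece $Y_i$ carrying them) with the labelling of Figure~\ref{model_surface}, and pinning down which compressing curve of each tube reads as a $\mu$ and which as a $\lambda$; in particular, why the family $i+m+1$ contributes a longitude while $i,\dots,m+i-1$ contribute meridians, and which family ($i+m$) is structural. Closely tied to this is the justification of the handleslide claim of step~2, namely that the band--sum reconstruction of each $H_\delta$--handle core inside $\bigcap_{j\ne i}Y_j$ realises the $\delta$--curves as the stated iterated handleslides of the $\lambda^k_i$. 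Once these identifications are in place, verifying that $\alpha^i$ is a cut system (disjointness of the curves, bounding disks, surgery to $S^2$) is the routine translation of the handlebody decomposition of Proposition~\ref{prop_multisection} carried out above.
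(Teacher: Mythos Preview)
Your approach is essentially the paper's: work with one cyclic block $K$, use the decomposition $\mathcal A_K\cup\mathcal B_K\cup\mathcal C_K$ from Proposition~\ref{prop_multisection}, read off $g$ curves from the co--cores of the $\mathcal B_K$--handles and $mg$ curves from the extra drilling in $\mathcal A_K$, then identify the surgered surface with the Heegaard surface of $M$ and import the $\epsilon$--curves. The one substantive point you skate over is the role of $\mathcal C_K$: the $mg$ compressing disks you produce in $\mathcal A_K$ have boundaries lying on $\partial h^{s+1}_{B_{\{1,\dots,m+1\}}}$ for various $s$, and these cylinders are \emph{not} part of the central surface when $s\neq m+2$; the paper uses the product structure of $\mathcal C_K$ to push each such disk out to the correct cylinder $\partial h^{s+1}_{B_{\{1,\dots,\hat s,\dots,m+2\}}}$ on the central surface (this is its Figure~\ref{disk_extension}). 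Calling $\mathcal C_K$ ``a collar thickening'' is correct in spirit but hides exactly the step that puts your curves where the statement says they are. A smaller point: your phrase ``meridian disk of a drilled tube'' is ambiguous here---the paper's $mg$ disks are the ones \emph{merging} two adjacent tubes inside a handle of $H^o_\delta$ (its Figure~\ref{diagram_step1}), and it is these, after the extension above, that become the $\mu^k_j$. These are handleslide--equivalent to what you seem to intend, but since the proposition pins down specific curves, the distinction matters for the identification you flag as ``the main obstacle.''
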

See Figure \ref{first_step_examples} and Figure \ref{multisection_diagram_s2l21} for examples.
\begin{figure}[h!]
\[
\begin{tikzpicture}[scale=0.3]
\node (mypic) at (0,0) {\includegraphics[scale=0.33]{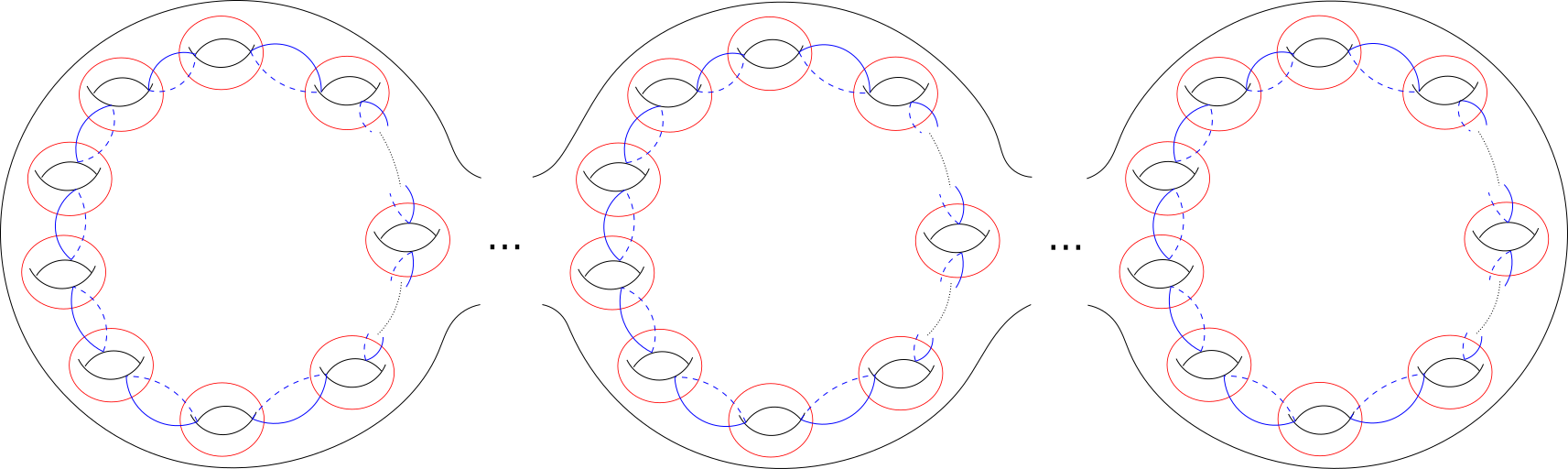}};
\node[red, scale=0.8] (a) at (-14,0) {$\lambda_{i}^1$};
\node[red, scale=0.8] (a) at (-14.5,3.1) {$\lambda_{m+2}^1$};
\node[red, scale=0.8](a) at (-17.5,3.8) {$\lambda_{1}^1$};
\node[red, scale=0.8] (a) at (-19.7,3.1) {$\lambda_2^1$};
\node[red, scale=0.8] (a) at (-20.7,1.6) {$\lambda_3^1$};
\node[red, scale=0.8](a) at (-20.9,-1) {$\lambda_4^1$};
\node[red, scale=0.8] (a) at (-19.6,-3.2) {$\lambda_5^1$};
\node[red, scale=0.8] (a) at (-17.7,-4) {$\lambda_6^1$};
\node[red, scale=0.8] (a) at (-15.2,-3) {$\lambda_7^1$};
\node[blue, scale=0.8] (a) at (-15.5,-6.4) {$\mu_6^1$};
\node[blue, scale=0.8] (a) at (-20.7,-6) {$\mu_5^1$};
\node[blue, scale=0.8] (a) at (-23,-3) {$\mu_4^1$};
\node[blue, scale=0.8] (a) at (-24,0.4) {$\mu_3^1$};
\node[blue, scale=0.8] (a) at (-23,3.5) {$\mu_2^1$};
\node[blue, scale=0.8] (a) at (-20,6.2) {$\mu_1^1$};
\node[blue, scale=0.8] (a) at (-15.5,6.6) {$\mu_{m+2}^1$};
\node[blue, scale=0.8] (a) at (-11,1.5) {$\mu_{i}^1$};
\node[blue, scale=0.8] (a) at (-11,-1.9) {$\mu_{i-1}^1$};
\begin{scope}[xshift=17.5cm]
\node[red, scale=0.8] (a) at (-14,0) {$\lambda_{i}^k$};
\node[red, scale=0.8] (a) at (-14.5,3.1) {$\lambda_{m+2}^k$};
\node[red, scale=0.8](a) at (-17.5,3.8) {$\lambda_{1}^k$};
\node[red, scale=0.8] (a) at (-19.7,3.1) {$\lambda_2^k$};
\node[red, scale=0.8] (a) at (-20.7,1.6) {$\lambda_3^k$};
\node[red, scale=0.8](a) at (-20.9,-1) {$\lambda_4^k$};
\node[red, scale=0.8] (a) at (-19.6,-3.2) {$\lambda_5^k$};
\node[red, scale=0.8] (a) at (-17.7,-4) {$\lambda_6^k$};
\node[red, scale=0.8] (a) at (-15.2,-3) {$\lambda_7^k$};
\node[blue, scale=0.8] (a) at (-15.5,-6.4) {$\mu_6^k$};
\node[blue, scale=0.8] (a) at (-20.7,-6) {$\mu_5^k$};
\node[blue, scale=0.8] (a) at (-23,-3) {$\mu_4^k$};
\node[blue, scale=0.8] (a) at (-24,0.4) {$\mu_3^k$};
\node[blue, scale=0.8] (a) at (-23,3.5) {$\mu_2^k$};
\node[blue, scale=0.8] (a) at (-20,6.2) {$\mu_1^k$};
\node[blue, scale=0.8] (a) at (-15.5,6.6) {$\mu_{m+2}^k$};
\node[blue, scale=0.8] (a) at (-11,1.5) {$\mu_{i}^k$};
\node[blue, scale=0.8] (a) at (-11,-1.9) {$\mu_{i-1}^k$};
\end{scope}
\begin{scope}[xshift=34.8cm]
\node[red, scale=0.8] (a) at (-14,0) {$\lambda_{i}^g$};
\node[red, scale=0.8] (a) at (-14.5,3.1) {$\lambda_{m+2}^g$};
\node[red, scale=0.8](a) at (-17.5,3.8) {$\lambda_{1}^g$};
\node[red, scale=0.8] (a) at (-19.7,3.1) {$\lambda_2^g$};
\node[red, scale=0.8] (a) at (-20.7,1.6) {$\lambda_3^g$};
\node[red, scale=0.8](a) at (-20.9,-1) {$\lambda_4^g$};
\node[red, scale=0.8] (a) at (-19.6,-3.2) {$\lambda_5^g$};
\node[red, scale=0.8] (a) at (-17.7,-4) {$\lambda_6^g$};
\node[red, scale=0.8] (a) at (-15.2,-3) {$\lambda_7^g$};
\node[blue, scale=0.8] (a) at (-15.5,-6.4) {$\mu_6^g$};
\node[blue, scale=0.8] (a) at (-20.7,-6) {$\mu_5^g$};
\node[blue, scale=0.8] (a) at (-23,-3) {$\mu_4^g$};
\node[blue, scale=0.8] (a) at (-24,0.4) {$\mu_3^g$};
\node[blue, scale=0.8] (a) at (-23,3.5) {$\mu_2^g$};
\node[blue, scale=0.8] (a) at (-20,6.2) {$\mu_1^g$};
\node[blue, scale=0.8] (a) at (-15.5,6.6) {$\mu_{m+2}^g$};
\node[blue, scale=0.8] (a) at (-11,1.5) {$\mu_{i}^g$};
\node[blue, scale=0.8] (a) at (-11.1,-1.9) {$\mu_{i-1}^g$};
\end{scope}
\end{tikzpicture}
\]
\caption{Embedding of the central surface featuring the curves used in the first step of the algorithm}
\label{model_surface}
\end{figure}

\begin{figure}[h!]
\[
\begin{tikzpicture}[scale=0.3]
\node (mypic) at (0,0) {\includegraphics[scale=0.3]{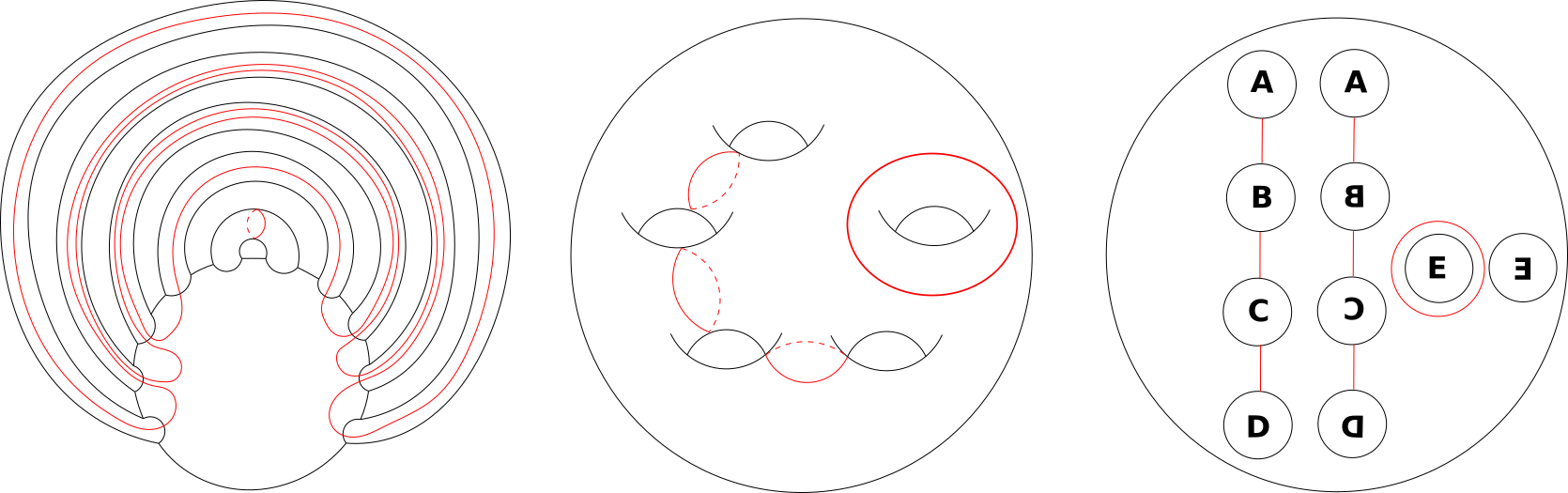}};
\end{tikzpicture}
\]
\caption{Various embeddings of the central surface when $m=3$ and $g=1$, showing the curves $\alpha^1$ obtained during the first step}
\label{first_step_example}
\end{figure}
\begin{figure}[h!]
\[
\begin{tikzpicture}[scale=0.3]
\node (mypic) at (0,0) {\includegraphics[scale=0.3]{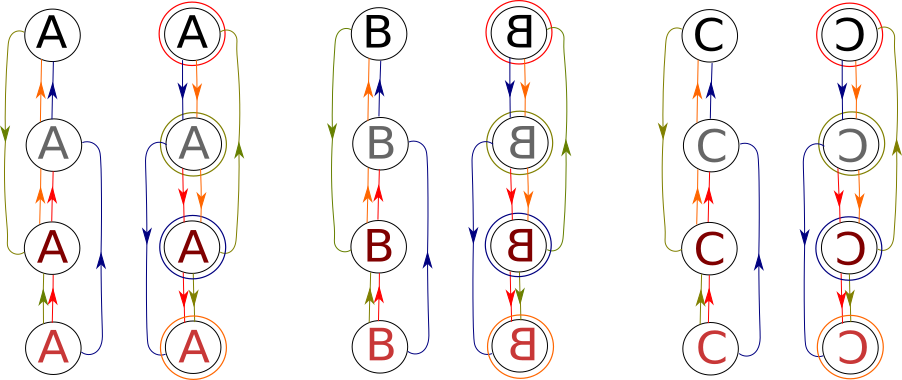}};
\end{tikzpicture}
\]
\caption{Curves obtained during the first step, when $g=3$ and $m=2$; one color per family of curves $\alpha^i$}
\label{first_step_examples}
\end{figure}
\begin{figure}[h!]
\[
\begin{tikzpicture}[scale=0.3]
\node (mypic) at (0,0) {\includegraphics[scale=0.3]{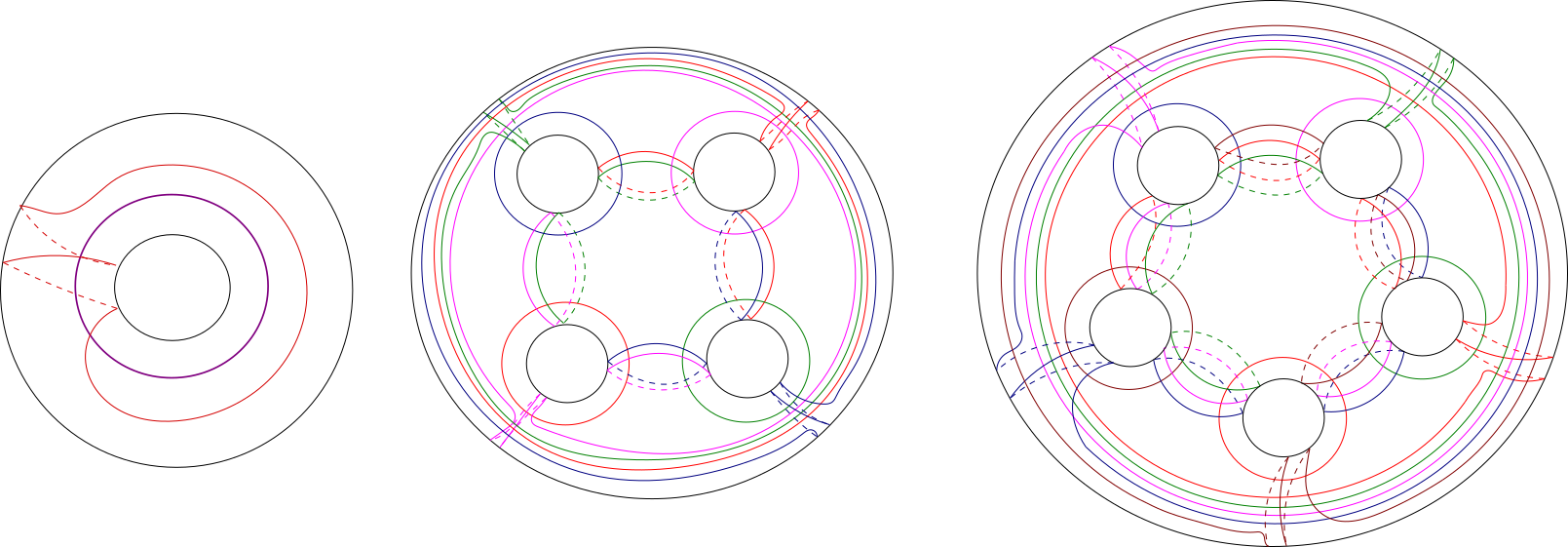}};
\end{tikzpicture}
\]
\caption{From left to right: a Heegaard diagram for the lens space $L(2,1)$; the corresponding $4$--section diagram for $\mathcal{S}_{2}(L(2,1))$; the corresponding $5$--section diagram for $\mathcal{S}_{3}(L(2,1))$}
\label{multisection_diagram_s2l21}
\end{figure}
\begin{proof}
In what follows, we keep the notations of Section \ref{secproof}. Notice that if $\lvert K \rvert = m+1$, $B_K$ is just one ray in $B^{m+1}$, therefore the notation $h_{B_K}^i$ makes sense for such a $K$. As the $3$--dimensional intersections are all built in the same fashion, we draw a system of curves for $Y_{\lbrace 1,...,m+1 \rbrace}=\mathcal{A}_{\lbrace 1,...,m+1 \rbrace} \cup \mathcal{B}_{\lbrace 1,...,m+1 \rbrace} \cup \mathcal{C}_{\lbrace 1,...,m+1 \rbrace}$, and obtain the other cut systems by rotation.\\
The central surface is the union of a $2g(m+2)$--punctured sphere (corresponding to $\mathcal{A}_{\lbrace 1,...,m+2 \rbrace}$) and $(m+2)g$ cylinders corresponding to $\mathcal{B}_{\lbrace 1,...,m+2 \rbrace } \cup \mathcal{C}_{\lbrace 1,...,m+2 \rbrace}$, i.e. each cylinder corresponds to one of the $g$ components of $\overline{\partial h_{B_{\lbrace 1,...,\hat{s},...,m+2 \rbrace}}^{s+1} \setminus D_\pm^{s+1}}$, for some $1 \leq s \leq m+2$  (see Figure \ref{central_surface}).
As $\mathcal{B}_{\lbrace 1,...,m+1 \rbrace}=h_{B_{\lbrace 1,...,m,m+2 \rbrace}}^{m+2}$, we can choose one meridian per cylinder in $\overline{\partial h_{B_{\lbrace 1,...,m,m+2 \rbrace }}^{m+2} \setminus D_\pm^{m+2}}$ as our first $g$ curves.
The next $mg$ curves are constructed as follows. Choose $mg$ disjoint, non--parallel disks in $H^o_{\delta, \lbrace 1,...,m+1 \rbrace}$, such that each disk merges two of the tubes inside one of the $g$ handles  $h_{\ell}$ at $B_{\lbrace 1,...,m+1 \rbrace}$ (more precisely, such a disk is obtained by choosing two tubes inside one $h_{\ell}$, two arcs on their boundaries, parallel to the core of each tube, and connecting these arcs by two disjoint arcs on the central copy of $S^2$; the curve thus obtained bounds a disk in $H^o_{\delta, \lbrace 1,...,m+1 \rbrace}$, see Figure \ref{diagram_step1}). The boundary of these disks does not lie totally on the central surface, as the boundary of a tube $h_{\ell,B_{\lbrace 1,...,m+1 \rbrace}}^{s+1}$ does not belong to the central surface if $s \neq m+2$. But we can extend these disks until their boun\-daries lie on the central surface, using the product structure in $\mathcal{C}_{\lbrace 1,...,m+1 \rbrace}$: for all $s \neq m+2$, the thickened cylinder $\partial h_{\ell, B_{\lbrace 1,...,\hat{s},...,m+1 \rbrace}}^{s+1}$ connects $\partial h_{\ell,B_{\lbrace 1,...,m+1 \rbrace}}^{s+1}$ and $\partial h_{\ell, B_{\lbrace 1,...,\hat{s},...,m+2 \rbrace}}^{s+1}$ (which belongs to the central surface) inside $\mathcal{C}_{\lbrace 1,...,m+1 \rbrace}$ (see Figure \ref{disk_extension}). See Figure \ref{first_step_example} and Figure \ref{first_step_examples} for examples showing these curves. 

\begin{figure}[h!]
\[
\begin{tikzpicture}[scale=0.5]
\node (mypic) at (0,0) {\includegraphics[scale=0.5]{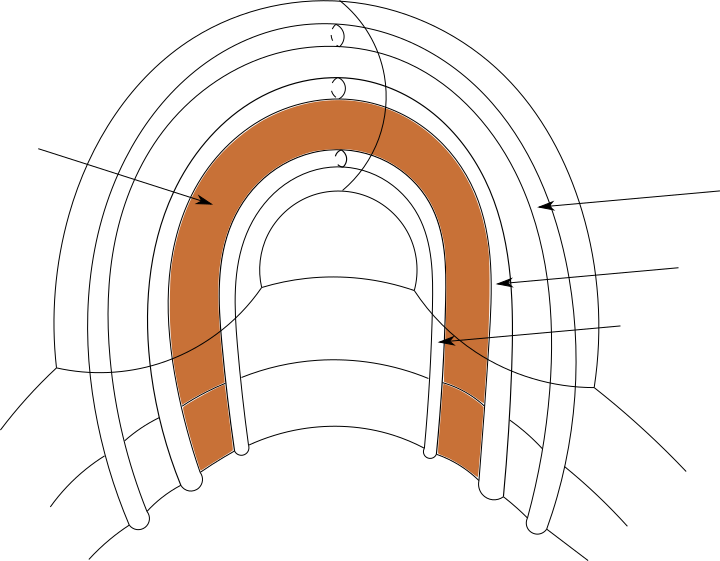}};
\node (a) at (8.8,-1.5) {$h_{1,B_{\lbrace 1,...,m+1 \rbrace}}^{s+1}$};
\node (a) at (10.4,0.1) {$h_{1,B_{\lbrace 1,...,m+1 \rbrace}}^{s+2}$};
\node (a) at (11.6,2.1) {$h_{1,B_{\lbrace 1,...,m+1 \rbrace}}^{m+1}$};
\node (a) at (4.4,1) {...};
\node (a) at (-6,1) {...};
\node (a) at (0,-5) {$S^2 \times \lbrace 0 \rbrace$};
\node (a) at (-9,3.5) {$D$};
\end{tikzpicture}
\]
\caption{A disk $D$ in brown, merging the tubes $h_{1,B_{\lbrace 1,...,m+1 \rbrace}}^{s+1}$ and $h_{1,B_{\lbrace 1,...,m+1 \rbrace}}^{s+2}$ inside the copy of the handle $h_{1}$ at $B_{\lbrace 1,...,m+1 \rbrace}$}
\label{diagram_step1}
\end{figure}

\begin{figure}[h!]
\[
\begin{tikzpicture}[scale=0.5]
\node (mypic) at (0,0) {\includegraphics[scale=0.5]{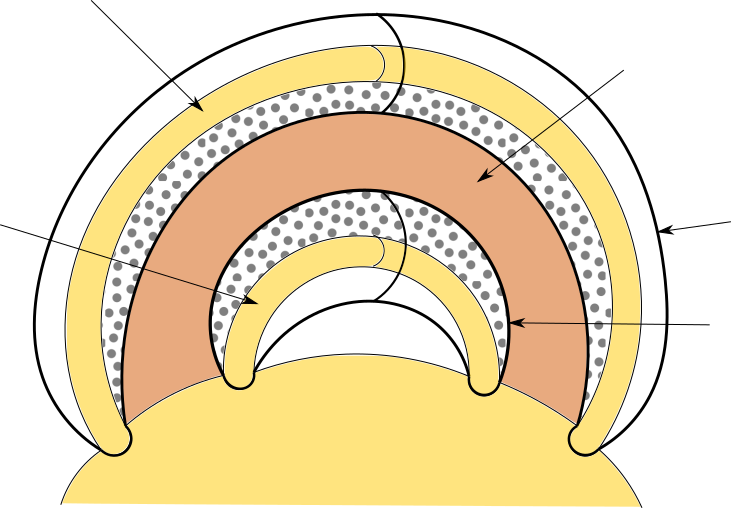}};
\node (a) at (12.1,1) {$h_{\ell,B_{\lbrace 1,...,m+1 \rbrace}}^{s+2}$};
\node (a) at (11.5,-2) {$h_{\ell,B_{\lbrace 1,...,m+1 \rbrace}}^{s+1}$};
\node (a) at (7,5.3) {$D$};
\node (a) at (-11,1.5) {$\partial h_{\ell,B_{\lbrace 1,...,\hat{s},...,m+2 \rbrace}}^{s+1}$};
\node (a) at (-9,7.5) {$\partial h_{\ell,B_{\lbrace 1,...,s,s+2,...,m+2 \rbrace}}^{s+2}$};

\node (a) at (0,-5) {$S^2 \times \lbrace 0 \rbrace$};
\end{tikzpicture}
\]
\caption{The disk $D$ (in brown) extends to the central surface (in yellow) through the dotted areas in $\mathcal{C}_{\lbrace 1,...,m+1 \rbrace}$\\{\footnotesize These dotted areas are two disks, one inside the thickened cylinder $\partial h_{\ell, B_{\lbrace 1,...,\hat{s},...,m+1 \rbrace}}^{s+1}$, which connects $\partial h_{\ell,B_{\lbrace 1,...,m+1 \rbrace}}^{s+1}$ to $\partial h_{\ell, B_{\lbrace 1,...,\hat{s},...,m+2 \rbrace}}^{s+1}$ inside $\mathcal{C}_{\lbrace 1,...,m+1 \rbrace}$, and the other inside the thickened cylinder $\partial h_{\ell, B_{\lbrace 1,...,s,s+2,...,m+1 \rbrace}}^{s+2}$, which connects $\partial h_{\ell,B_{\lbrace 1,...,m+1 \rbrace}}^{s+2}$ to $\partial h_{\ell, B_{\lbrace 1,...,s,s+2,...,m+2 \rbrace}}^{s+2}$ inside $\mathcal{C}_{\lbrace 1,...,m+1 \rbrace}$; these disks are pinched near $S^2 \times \lbrace 0 \rbrace$ because of the identification of the origin of the rays $r$ at $\lbrace 0 \rbrace \in B^{m+1}$  }} 
\label{disk_extension}
\end{figure}

We thus obtain $mg$ disjoint curves on the central surface, bounding disks in $Y_{\lbrace 1,...,m+1 \rbrace}$. After performing surgery along these curves and the $g$ curves constructed during the first step, we are down to a genus--$g$ surface $S_{\lbrace 1,...,m+1 \rbrace}$, which can be isotoped to the union of the (punctured) central copy of $S^2$ and the $g$ cylinders $\overline{\partial h_{B_{\lbrace 1,...,m+1 \rbrace}}^{1} \setminus D_{\pm}^{1}}$. This latter surface bounds $\mathcal{H}_{\epsilon,B_{\lbrace 1,...,m+1 \rbrace}}^1 \subset \mathcal{A}_{\lbrace 1,...,m+1 \rbrace}$. Because $H_{\epsilon,B_{\lbrace 1,...,m+1 \rbrace}}$ is a deformation retract of $\mathcal{H}_{\epsilon,B_{\lbrace 1,...,m+1 \rbrace}}^1$ inside $\mathcal{A}_{\lbrace 1,...,m+1 \rbrace}$, we can isotope the cut system for $H_{\epsilon, B_{\lbrace 1,...,m+1 \rbrace}}$ featured on the Heegaard surface of $M$ at $B_{\lbrace 1,...,m+1 \rbrace}$, until it reaches the central surface. This gives us $g$ curves, bounding disks in $\mathcal{A}_{m+1}$, and ensures that surgering $S_{\lbrace 1,...,m+1 \rbrace}$ along these curves produces a $2$--sphere.\\
Therefore we have defined a cut system of $(m+2)g$ curves on the central surface, corresponding to $Y_{\lbrace 1,...,m+1 \rbrace}$. The cut systems corresponding to the other $Y_{\lbrace 1,..., \hat{s},...,m+2 \rbrace}$ are obtained by the same process, with the curves obtained in the first step bounding meridian disks in $h_{B_{\lbrace 1,...,s-2,s,...,m+2 \rbrace}}^{s}$.\\
Notice that, because the handles $h_{\ell}$ of $H_{\delta}$ are parallel to the tubes $h_{\ell,B_{\lbrace 1,..., \hat{s},...m+2 \rbrace}}^{s+1}$, the curves corresponding to $H_{\delta}$ in the Heegaard diagram for $M$ must be isotopic to the meridians for the $h_{\ell,B_{\lbrace 1,..., \hat{s},...m+2 \rbrace}}^{s+1}$. This is why, in order to perform the last step, we need to start from a Heegaard diagram of $M$ such that $H_{\delta}$ is standardly embedded in $S^3$. This means that the curves corres\-ponding to $H_{\delta}$ are all meridians or all longitudes, as in Figure \ref{standard_diagram}. If the curves obtained in the first step are meridians (resp. longitudes), then we need to use a Heegaard diagram such that all curves corresponding to $H_{\delta}$ are meridians (resp. longitudes). With this requirement, the $g$ curves obtained during the last step are the curves defining $H_{\epsilon}$ on the Heegaard diagram. 
\end{proof}
A $4$--section diagram for $\mathcal{S}_{2}(L(2,1))$ and a $5$--section diagram for $\mathcal{S}_{3}(L(2,1))$ are featured on Figure \ref{multisection_diagram_s2l21}. One cut system of a $4$--section diagram for $\mathcal{S}_{2}(\mathbb{T}^3)$ is featured on Figure \ref{multisection_diagram_t3}. One cut system of a $4$--section diagram for $\mathcal{S}_{2}(\Sigma(2,3,5))$ is featured on Figure \ref{multisection_diagram_poincare} (with $\Sigma(2,3,5)$ the Poincaré homology sphere). As $\mathcal{S}_{m}(\Sigma(2,3,5))$ is a homology sphere of dimension $m+3$ (see Lemma \ref{homology_lemma}), this provides examples of multisected homology spheres in any dimension. 
\begin{Lemma}
\label{homology_lemma}
The homology groups of $\mathcal{S}_{m} (M)$ are, for $m > 1$:
\begin{equation*}
H_{k}(S_{m}(M)) =
\begin{cases}
\mathbb{Z} & \text{if $k=0, m+3$}\\
H_1(M) & \text{if $k=1, m+1$}\\ 
H_2(M) & \text{if $k=2, m+2$}\\
0  & \text{otherwise}
\end{cases}
\end{equation*}
The homology groups of $\mathcal{S}_{1} (M)$ are:
\begin{equation*}
H_{k}(S_{1}(M)) =
\begin{cases}
\mathbb{Z} & \text{if $k=0, 4$}\\
H_1(M) & \text{if $k=1$}\\ 
H_1(M) \oplus H_2(M) & \text{if $k=2$}\\
H_2(M) & \text{if $k=3$}\\
0  & \text{otherwise}
\end{cases}
\end{equation*}
\end{Lemma}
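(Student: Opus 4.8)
The plan is to read off the homology of $\mathcal{S}_m(M)$ from the splitting $\mathcal{S}_m(M)=(M^o\times S^m)\cup_{S^2\times S^m}(S^2\times B^{m+1})$ by a Mayer--Vietoris argument, after computing the homology of the three pieces. The first step is to record the homology of $M^o$ (the complement in $M$ of an open $3$--ball): Mayer--Vietoris applied to $M=M^o\cup_{S^2}B$, or excision, gives $H_0(M^o)=\mathbb{Z}$, $H_1(M^o)\cong H_1(M)$, $H_2(M^o)\cong H_2(M)$ and $H_k(M^o)=0$ for $k\ge 3$. What really matters is that the inclusion $\iota\colon S^2=\partial M^o\hookrightarrow M^o$ is zero on reduced homology; the only nontrivial part is $\iota_*\colon H_2(S^2)\to H_2(M^o)$, and this vanishes because $[\partial M^o]$ generates $H_2(\partial M^o)=\mathbb{Z}$ and lies in the image of the connecting homomorphism $H_3(M^o,\partial M^o)\to H_2(\partial M^o)$, hence is killed by $\iota_*$ by exactness of the long exact sequence of the pair $(M^o,\partial M^o)$.

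The second step is to compute the pieces. As $H_*(S^m)$ and $H_*(S^2)$ are free, the Künneth formula gives $H_k(M^o\times S^m)\cong H_k(M^o)\oplus H_{k-m}(M^o)$ and likewise $H_k(S^2\times S^m)\cong H_k(S^2)\oplus H_{k-m}(S^2)$, while $S^2\times B^{m+1}\simeq S^2$. Using the previous step, the inclusion $S^2\times S^m\hookrightarrow M^o\times S^m$ kills $[S^2\times\mathrm{pt}]$ and $[S^2\times S^m]$ and carries $[\mathrm{pt}\times S^m]$ to a generator of $H_0(M^o)\otimes H_m(S^m)$, and the inclusion $S^2\times S^m\hookrightarrow S^2\times B^{m+1}$ carries $[S^2\times\mathrm{pt}]$ to a generator of $H_2(S^2)$ and kills $[\mathrm{pt}\times S^m]$ and $[S^2\times S^m]$.

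Feeding this into the Mayer--Vietoris sequence of $\mathcal{S}_m(M)$ then finishes the computation: the comparison map $H_*(S^2\times S^m)\to H_*(M^o\times S^m)\oplus H_*(S^2\times B^{m+1})$ is injective in every degree except the top degree $m+2$ of $S^2\times S^m$, where it is zero. This gives at once $H_{m+3}(\mathcal{S}_m(M))=\mathbb{Z}$ and $H_{m+2}(\mathcal{S}_m(M))=H_2(M)$, and comparing kernels and cokernels in the remaining degrees gives $H_1=H_1(M)$, $H_2=H_2(M)$, $H_{m+1}=H_1(M)$ and $0$ in all other degrees, which is the first formula for $m\ge 2$. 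The case $m=1$ runs through the same way, except that now the degrees $2$ and $m+1$, resp.\ $3$ and $m+2$, coincide; one gets the short exact sequence $0\to H_1(M)\to H_2(\mathcal{S}_1(M))\to H_2(M)\to 0$ and an isomorphism $H_3(\mathcal{S}_1(M))\cong H_2(M)$, and the first sequence splits because $H_2(M)\cong H^1(M)$ is free, which gives the second formula.

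The argument uses no deep ingredient, so the step to be careful with is purely the degree bookkeeping for small $m$: when $m\in\{1,2,3\}$ several of the degrees $0,1,2$ and $m,m+1,m+2,m+3$ collide or become adjacent, so the Mayer--Vietoris terms $H_{k-1}(S^2\times S^m)$ and $H_k(M^o\times S^m)$ flanking $H_k(\mathcal{S}_m(M))$ must be recomputed in those ranges instead of being read off the generic pattern, and the freeness of $H_2(M)$ is precisely what makes the one nontrivial extension that arises (at $m=1$) split. Alternatively, one could apply Lefschetz duality to $\mathcal{S}_m(M)=\partial(M^o\times B^{m+1})$ and use the long exact sequence of that pair together with the cohomology of $M^o$; the same low-dimensional bookkeeping is needed there.
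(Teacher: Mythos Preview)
Your argument is correct and follows essentially the same route as the paper: compute $H_*(M^o)$, apply K\"unneth to the two product pieces, and run the Mayer--Vietoris sequence for the decomposition $\mathcal{S}_m(M)=(M^o\times S^m)\cup_{S^2\times S^m}(S^2\times B^{m+1})$. Your write-up is in fact more careful than the paper's, which only displays the K\"unneth outputs and the Mayer--Vietoris sequence without analysing the comparison maps; your explicit verification that $\iota_*\colon H_2(S^2)\to H_2(M^o)$ vanishes and that the extension at $m=1$ splits by freeness of $H_2(M)$ are exactly the points one needs to justify the stated answer.
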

\begin{proof}
We have $H_{k}(M^o)=H_k(M)$, $0 \leq k \leq 2$, and $H_{k}(M^o)=0$, $k \geq 3$. Then according to the Künneth Theorem, we have, for $m>2$:
\begin{equation*}
H_{k}(M^o \times S^m) =
\begin{cases}
\mathbb{Z} & \text{if $k=0, m$}\\
H_1(M) & \text{if $k=1, m+1$}\\ 
H_2(M) & \text{if $k=2, m+2$}\\
0  & \text{otherwise}
\end{cases}
\end{equation*}
and also:
\begin{equation*}
H_{k}(S^2 \times S^m) =
\begin{cases}
\mathbb{Z} & \text{if $k=0, 2, m, m+2$}\\
0  & \text{otherwise}
\end{cases}
\end{equation*}
Then we can use the Mayer--Vietoris sequence to compute the homology of $\mathcal{S}_{m}(M)= (M^o \times S^m) \cup_{S^2 \times S^m} (S^2 \times B^{m+1})$:
\begin{equation*}
H_k(S^2 \times S^m) \rightarrow H_{k}(S^2) \oplus H_{k}(M^o \times S^m) \rightarrow H_{k}(S_{m}M) \rightarrow H_{k-1}(S^2 \times S^m)
\end{equation*} 
The case $m=1$ and $m=2$ can be dealt with in the same way, using:
\begin{equation*}
H_{k}(M^o \times S^1) =
\begin{cases}
\mathbb{Z} & \text{if $k=0$}\\
H_1(M) \oplus \mathbb{Z} & \text{if $k=1$}\\ 
H_1(M) \oplus H_2(M) & \text{if $k=2$}\\
H_2(M) & \text{if $k=3$}\\
0  & \text{otherwise}
\end{cases}
\end{equation*}
and:
\begin{equation*}
H_{k}(M^o \times S^2) =
\begin{cases}
\mathbb{Z} & \text{if $k=0$}\\
H_1(M) & \text{if $k=1,3$}\\ 
H_2(M) \oplus \mathbb{Z} & \text{if $k=2$}\\
H_2(M) & \text{if $k=4$}\\
0  & \text{otherwise}
\end{cases}
\end{equation*}
\end{proof}
\begin{figure}[h!]
\[
\begin{tikzpicture}[scale=0.3]
\node (mypic) at (0,0) {\includegraphics[scale=0.3]{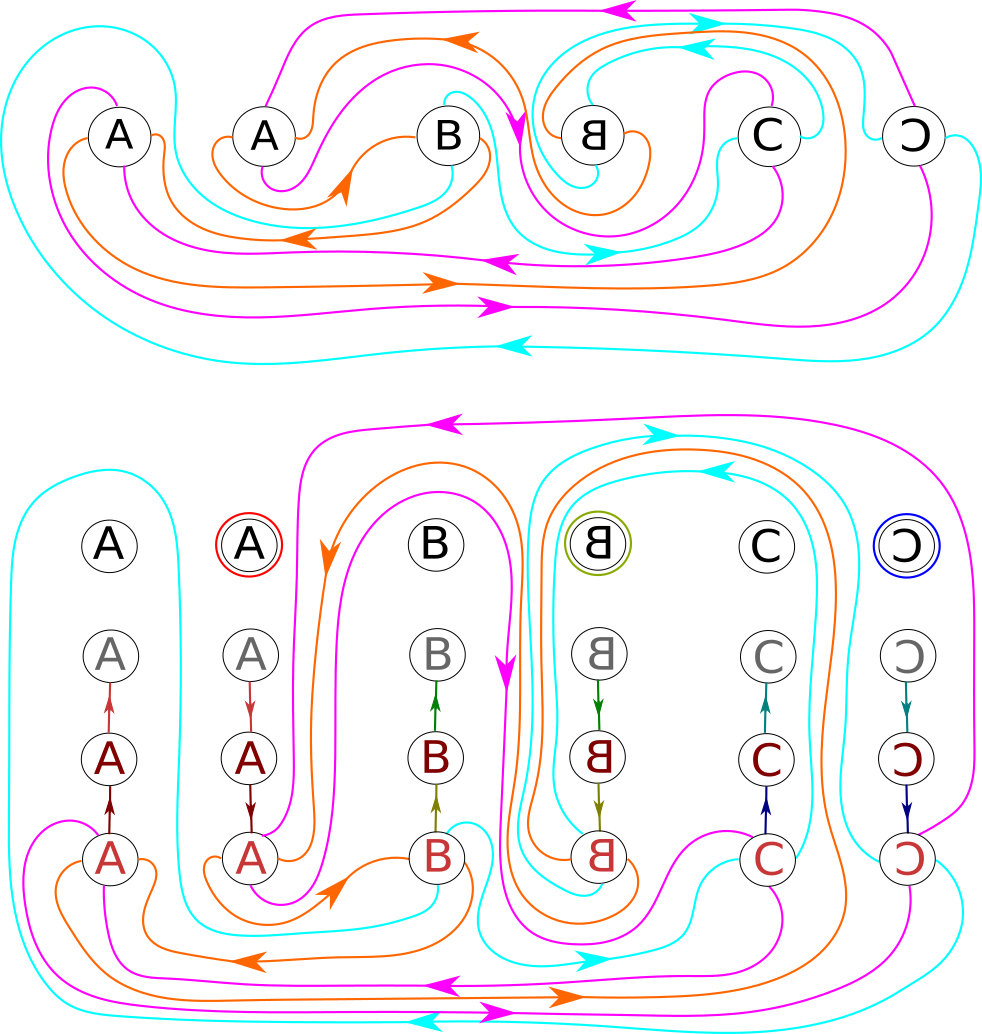}};
\end{tikzpicture}
\]
\caption{One cut system of a $4$--section diagram for $\mathcal{S}_2 (\mathbb{T}^3)$, based on the Heegaard diagram for $\mathbb{T}^3$ above, one color per curve}
\label{multisection_diagram_t3}
\end{figure}
\begin{figure}[h!]
\[
\begin{tikzpicture}[scale=0.2]
\node (mypic) at (0,0) {\includegraphics[scale=0.2]{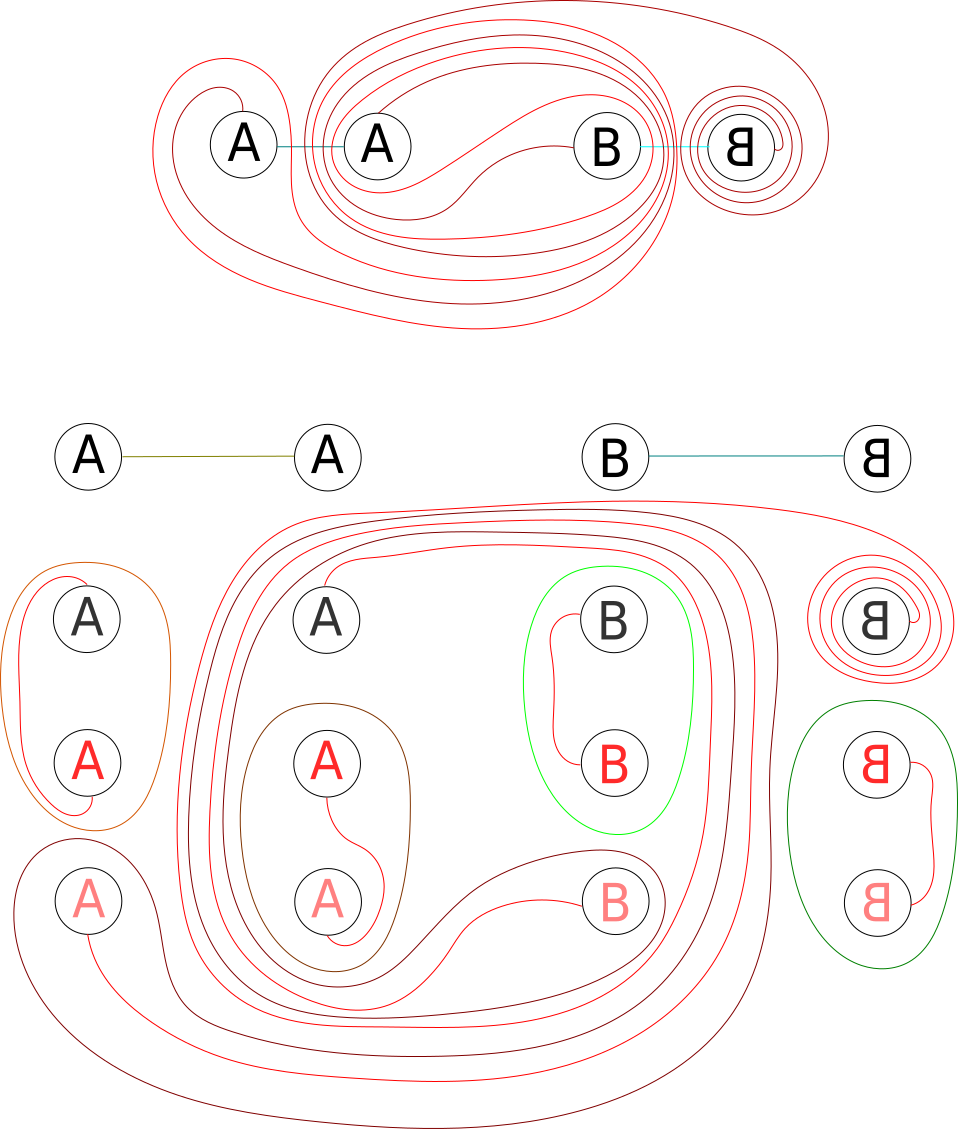}};
\end{tikzpicture}
\]
\caption{One cut system of a $4$--section diagram for $\mathcal{S}_2 (\Sigma(2,3,5))$, based on the Heegaard diagram for $\Sigma(2,3,5)$ above, one color per curve}
\label{multisection_diagram_poincare}
\end{figure}
\FloatBarrier

\bibliographystyle{/home/rudy/Documents/article/smfalpha.bst}
\bibliography{/home/rudy/Documents/Multisections_of_spun_manifolds/biblio_spun.bib}
\end{document}